\newcommand{\N}{\ensuremath\mathbb{N}}
\newcommand{\R}{\ensuremath\mathbb{R}}
\newcommand{\T}{\ensuremath\mathsf{T}}
\newcommand{\ds}{\,\mathrm{d}s}
\DeclareMathOperator{\real}{Re}
\newcommand{\calI}{\mathcal{I}}
\newcommand{\calK}{\mathcal{K}}
\newcommand{\Lone}{L^1}
\newcommand{\Ltwo}{L^2}
\newcommand{\Lp}{L^p}
\definecolor{mycolor1}{rgb}{0.00000,0.44700,0.74100}
\definecolor{mycolor2}{rgb}{0.85000,0.32500,0.09800}
\definecolor{mycolor3}{rgb}{0.92900,0.69400,0.12500}
\definecolor{mycolor4}{rgb}{0.46600,0.67400,0.18800}
\definecolor{mycolor5}{rgb}{0.49400,0.18400,0.55600}
\definecolor{nicegreen}{rgb}{0.3,0.7,0.4}
\newcommand{\timeInt}{\mathbb{T}}
\newcommand{\state}{x}
\renewcommand{\i}{\mathrm{i}}
\DeclareMathOperator{\Real}{Re}
\newcommand{\linop}[2]{\mathcal{L}(#1, #2)}
\newcommand{\linopo}[1]{\mathcal{L}(#1)}
\newcommand{\mexp}[1]{\mathrm{e}^{#1}}
\newcommand{\residualbc}{\delta_{\mathrm{b}}}
\newcommand{\residualbcdot}{\dot{\delta}_{\mathrm{b}}}
\newcommand{\residualinit}{\delta_{\mathrm{0}}}
\newcommand{\residualevo}{\delta}
\newcommand{\coordnk}[1]{x_{n, #1}}
\newcommand{\deltacoordn}{\Delta_n}
\newcommand{\statespace}{Z}
\newcommand{\statespacewnorm}{(\statespace, \|\cdot \|_{\statespace})}
\renewcommand{\state}{z}
\newcommand{\statebc}{\state_{\mathrm{b}}}
\newcommand{\stateinit}{\state_{\mathrm{0}}}
\newcommand{\stateapprox}{\tilde{\state}}
\newcommand{\approxstatespace}{\statespace_n}
\newcommand{\staten}{\state_n}
\newcommand{\statendot}{\dot{\state}_n}
\newcommand{\statenk}{\state_{n, k}}
\newcommand{\stategeneral}{\state} 
\newcommand{\stategeneralbc}{\state_{\mathrm{b}}} 
\newcommand{\stategeneralbcdot}{\dot{\state}_{\mathrm{b}}} 
\newcommand{\stategeneralinit}{\state_{\mathrm{0}}} 
\newcommand{\stategeneralapprox}{\overline{\state}_n} 
\newcommand{\statefattorini}{\xi}
\newcommand{\stateerr}{\mathrm{e}}
\newcommand{\controlspace}{U}
\newcommand{\controlspacewnorm}{(\controlspace, \|\cdot \|_{\controlspace})}
\newcommand{\vel}{\mathbf{v}}
\newcommand{\bop}{\mathfrak{D}}
\newcommand{\boprinv}{\mathfrak{D}_0}
\newcommand{\bopn}{\mathfrak{D}_n}
\newcommand{\bopnrinv}{\mathfrak{D}_{n,0}}
\newcommand{\A}{\mathfrak{A}}
\newcommand{\Agen}{\mathcal{A}}
\newcommand{\An}{\mathfrak{A}_n}
\newcommand{\Angen}{\mathcal{A}_n}
\renewcommand{\S}{\mathcal{S}}
\newcommand{\Sn}{\mathcal{S}_n}
\newcommand{\En}{\mathcal{E}_n}
\newcommand{\Pn}{\mathcal{P}_n}
\newcommand{\mup}{\mu_{\mathrm{p}}}
\newcommand{\mue}{\mu_{\mathrm{e}}}
\renewcommand{\d}[1]{\,\mathrm{d}#1}
\newcommand{\narrowinfty}{\xrightarrow[]{n\rightarrow\infty}}
\newcommand{\narrowinftySOT}{\xrightarrow[\mathrm{SOT}]{n\rightarrow\infty}}
\newcommand{\abbr}[1]{\textsf{#1}\xspace}
\newcommand{\FE}{\abbr{FE}}
\newcommand{\FEM}{\abbr{FEM}}
\newcommand{\BIVP}{\abbr{BIVP}}
\newcommand{\BIVPs}{\abbr{BIVPs}}
\newcommand{\NN}{\abbr{NN}}
\newcommand{\PINN}{\abbr{PINN}}
\newcommand{\PINNs}{\abbr{PINNs}}
\newcommand{\vPINNs}{\abbr{vPINNs}}
\newcommand{\LPINNs}{\abbr{LPINNs}}
\newcommand{\XPINNs}{\abbr{XPINNs}}
\newcommand{\ISS}{\abbr{ISS}}
\newcommand{\PDE}{\abbr{PDE}}
\newcommand{\PDEs}{\abbr{PDEs}}
\crefname{assumption}{Assumption}{Assumptions} 
\newcommand{\plotlinewidth}{1.5pt}
\definecolor{c_rose}{HTML}{CC6677}
\definecolor{c_indigo}{HTML}{332288}
\definecolor{c_sand}{HTML}{DDCC77}
\definecolor{c_green}{HTML}{117733}
\definecolor{c_cyan}{HTML}{88CCEE}
\definecolor{c_wine}{HTML}{882255}
\definecolor{c_teal}{HTML}{44AA99}
\definecolor{c_olive}{HTML}{999933}
\definecolor{c_purple}{HTML}{AA4499}
\colorlet{c_init}{c_olive}
\colorlet{c_evo}{c_rose}
\colorlet{c_int_1}{c_sand}
\colorlet{c_int_2}{c_olive}
\colorlet{c_bt}{c_cyan}
\colorlet{c_bzero}{c_teal}
\colorlet{c_tot}{c_wine}
\colorlet{c_ref}{c_green}
\title[Prediction error certification for PINNs]{Prediction error certification for PINNs: Theory, computation, and application to Stokes flow}
\author{Birgit Hillebrecht${}^\star$ \and Benjamin Unger${}^\dagger$}
\address{${}^{\star}$ ORCID:0000-0001-5361-0505, Institute for Applied and Numerical Mathematics, Karlsruhe Institute of Technology, 76131 Karlsruhe, Germany}
\email{birgit.hillebrecht@kit.edu}
\address{${}^{\dagger}$ ORCID: 0000-0003-4272-1079, Institute for Applied and Numerical Mathematics, Karlsruhe Institute of Technology, 76131 Karlsruhe, Germany}
\email{benjamin.unger@kit.edu}
\begin{document}

\begin{abstract}
	Rigorous error estimation is a fundamental topic in numerical analysis. With the increasing use of physics-informed neural networks (PINNs) for solving partial differential equations, several approaches have been developed to quantify the associated prediction error. In this work, we build upon a semigroup-based framework previously introduced by the authors for estimating the PINN error. While this estimator has so far been limited to academic examples -- due to the need to compute quantities related to input-to-state stability -- we extend its applicability to a significantly broader class of problems. This is accomplished by modifying the error bound and proposing numerical strategies to approximate the required stability parameters. The extended framework enables the certification of PINN predictions in more realistic scenarios, as demonstrated by a numerical study of Stokes flow around a cylinder.
\end{abstract}

\maketitle

\smallskip

\noindent \textbf{Keywords.} Physics-informed neural networks, rigorous error bound, Stokes flow, Trotter-Kato approximation\\

\noindent \textbf{Mathematics subject classification.} 
    65N15, 
    47D06, 
    35A35, 
    35F16, 
    41A65 


\section{Introduction}
\label{sec::Introduction}

Scientific machine learning, which refers to the use of machine learning methods to learn solution maps of dynamical systems for predictive purposes, has received considerable attention in recent years. Although the practical benefits of neural networks over classical methods for dynamical systems have yet to be shown or are even challenged \cite{GroKLS24}, machine learning still offers attractive prospects, such as the potential to overcome the curse of dimensionality.  

Since the introduction of \emph{physics-informed neural networks} (\PINNs) for approximating solutions to partial differential equations (\PDEs) \cite{RaiPK19}, research has rapidly expanded to encompass various adaptations, including variational formulations (\vPINNs) \cite{RadKA23,KhaZK19}, Lagrangian formulations (\LPINNs) \cite{MojBH23}, and domain decomposition strategies (\XPINNs) \cite{JagK20}. The increasing interest from both methodological and application-driven perspectives (e.g., \cite{CaiMWYK21,RadKA23, SchBGEM24, HagRMGJ21}) has led to comprehensive survey articles such as \cite{KarKLPWY21, CuoDGRRP22, MhaTJ25}.

While paving the way for more competitiveness with traditional numerical methods, considerable effort has been devoted to establishing convergence guarantees and rigorous error analysis for scientific machine learning. \emph{A priori} error estimates have been derived for either specific problems or specific network geometries \cite{DeRM22, DeRJM24, DeRM24, RojMMPP24, BerCP22_2} which also show convergence properties of the \PINN as numerical algorithm. While these results ensure the existence of neural networks that approximate the exact solution within a prescribed tolerance, they offer no insight into the prediction quality of a specific trained model. In fact, \cite{GroV24} highlights that the theory-to-practice gap in deep learning prevents algorithms based on finite sampled data from achieving the theoretical optimum. Consequently, one must resort to \emph{a posteriori} error estimates to assess the accuracy of a given trained \PINN. In this context, \cite{HilU22,HilU25} derive a rigorous error bound based on semigroup theory, \cite{BerCP22} develops a computable a posteriori error estimator for \vPINNs, and \cite{ErnU24} proposes a wavelet-based estimator. 

In this paper, we focus on the practical applicability of the a posteriori error estimators of \cite{HilU25} by developing tools to numerically estimate the required constants in the error certificates for challenging engineering and applied sciences problems. In more detail, given Banach spaces $\statespacewnorm$ and $\controlspacewnorm$, a time interval $\timeInt \vcentcolon= [0,T]\subseteq\R$,  and a spatial domain $\Omega\subseteq\R^{d}$, we consider the \emph{boundary and initial value problem} (\BIVP) 
\begin{equation}
    \label{eq::BIVP}
    \left\{\quad 
    \begin{aligned}
        \dot{\state} & = \A \state \quad & \mathrm{in}\; \timeInt \times \Omega ,\\
        \bop \state &= \statebc \quad & \mathrm{on} \;\timeInt \times \partial \Omega ,\\ 
        \state &= \state_0 \quad & \mathrm{in}\; \left\lbrace t=0 \right\rbrace \times \Omega,
    \end{aligned}\right.
\end{equation}
where $\A\colon D(\A)\subseteq \statespace\to \statespace$ is a linear (potentially unbounded) operator, the boundary operator $\bop \colon D(\bop)\subseteq \statespace \rightarrow U$ with $D(\A) \subseteq D(\bop)$ is a linear operator, and $\state_{\mathrm{b}}\in L^1(\timeInt; U)$ and $x_0\in \statespace$ are the boundary and initial values, respectively. It is worth noting, that the dynamical systems under consideration are assumed to be \PDEs. In particular, the differential operator $\A$ might contain derivatives w.r.t.~the spatial coordinates. For example for the heat equation 
\begin{equation*}
    \begin{aligned}
        \partial_t \state (t, x) = \alpha \partial_{xx} \state(t, x)
    \end{aligned}
\end{equation*}
the operator is given by $\A = \alpha \partial_{xx}$. We refer to the forthcoming \Cref{subsec:heatEquation} for further details on the associated spaces and the boundary operator.

In our setting, training a \PINN to approximate~\eqref{eq::BIVP} means that we design a candidate function $\stateapprox$, typically a deep neural network, that takes the time $t$ (and maybe further model parameters) as input and compute an approximation $\stateapprox(t)\in \statespace$ of $\state(t)\in \statespace$. Assuming that~$\stateapprox$ is sufficiently smooth, the \PINN approximation satisfies the perturbed \BIVP
\begin{equation}
    \label{eq::BIVP_Approx}
    \left\{\quad
    \begin{aligned}
        \dot{\stateapprox} & = \A \stateapprox + \residualevo \quad & \mathrm{in} \;\timeInt \times \Omega, \\
        \bop \stateapprox &= \statebc + \residualbc \quad & \mathrm{in}\; \timeInt \times \partial \Omega, \\ 
        \stateapprox &= x_0 + \residualinit \quad & \mathrm{in} \;\left\lbrace t=0 \right\rbrace \times \Omega, 
    \end{aligned}\right.
\end{equation}
with error contributions $\residualevo \colon \timeInt \rightarrow \statespace $, $\residualbc \colon \timeInt \rightarrow U$, and $\residualinit \in \statespace$ for the state equation, the boundary condition, and the initial value, respectively. For instance, the approximation is in general smooth enough if the hyperbolic tangent is used as an activation function. We emphasize that for a given \PINN instance the error contributions $\residualevo, \residualbc, \residualinit$ can be computed efficiently using algorithmic differentiation to calculate the residuals of $\stateapprox$ in the \BIVP~\eqref{eq::BIVP}. 

When deriving an error bound or error estimator for~$\|\state(t)-\stateapprox(t)\|_{\statespace}$, it is important to note that the contribution of the boundary error might be significant. Using \emph{input-to-state stability} (\ISS) \cite{Son89}, semigroup theory \cite{Paz89}, the forthcoming \Cref{ass::well_posed}, and the set of growth functions 
\begin{equation*}
	\calK \vcentcolon= \{\mu\colon \R_{0}^+\to\R_0^+ \mid \mu(0) = 0, \mu \text{ continuous, strictly increasing}\},
\end{equation*}
the following a posteriori error bound was derived in \cite{HilU25}.
\begin{theorem}[{\!\!\cite[Thm.~2]{HilU25}}]
	\label{thm:errorBound}
	Let the \BIVPs~\eqref{eq::BIVP} and~\eqref{eq::BIVP_Approx} satisfy \Cref{ass::well_posed}. Then there exists constants $M>0$, $\omega\in\R$ and a growth function $\gamma\in\calK$ such that the mild solutions $\state$ and $\stateapprox$ of~\eqref{eq::BIVP} and \eqref{eq::BIVP_Approx} satisfy
	\begin{equation}
		\label{eq::errorBound}
    	\|\state(t)-\stateapprox(t)\|_{\statespace} \le M \mexp{\omega t} \|\delta_0 \|_{\statespace} + \int_0^t M\mexp{\omega (t-s)}\|\delta(s)\|_{\statespace} \ds + \gamma (\|\residualbc\|_{L_\infty(0,t; U)}).
	\end{equation}
\end{theorem}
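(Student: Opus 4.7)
The plan is to exploit the linearity of~\eqref{eq::BIVP} and~\eqref{eq::BIVP_Approx} in order to reduce the statement to three simpler error contributions, each of which can be quantified either by standard semigroup calculus or by the input-to-state-stability (\ISS) property contained in \Cref{ass::well_posed}. Concretely, I would introduce the error $\stateerr \vcentcolon= \state - \stateapprox$ and observe, by subtracting~\eqref{eq::BIVP_Approx} from~\eqref{eq::BIVP} and using the linearity of $\A$ and $\bop$, that $\stateerr$ satisfies a \BIVP of the same form as~\eqref{eq::BIVP} driven by the data $-\residualinit$, $-\residualevo$, and $-\residualbc$. Using linearity once more, I would split $\stateerr = \stateerr_{\mathrm 0}+\stateerr_\delta+\stateerr_{\mathrm b}$, where $\stateerr_{\mathrm 0}$ carries only the initial perturbation, $\stateerr_\delta$ only the distributed forcing, and $\stateerr_{\mathrm b}$ only the boundary perturbation.

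Next I would use \Cref{ass::well_posed} to obtain a $C_0$-semigroup $(T(t))_{t\ge 0}$ on $\statespace$ generated by the realization of $\A$ with homogeneous boundary conditions, together with the standard growth estimate $\|T(t)\|_{\linopo{\statespace}}\le M\mexp{\omega t}$ for suitable $M>0$ and $\omega\in\R$. The mild-solution representations
\begin{align*}
\stateerr_{\mathrm 0}(t) &= -T(t)\residualinit, & \stateerr_\delta(t) &= -\int_0^t T(t-s)\residualevo(s)\ds,
\end{align*}
combined with the triangle inequality, immediately yield the first two summands of~\eqref{eq::errorBound}.

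The remaining and clearly hardest step is to control $\stateerr_{\mathrm b}$, because an inhomogeneous boundary datum is not covered by the standard variation-of-parameters formula. Here I would invoke the boundary-control and \ISS part of \Cref{ass::well_posed}, which provides a gain function $\gamma\in\calK$ such that
\begin{equation*}
\|\stateerr_{\mathrm b}(t)\|_\statespace\le\gamma\bigl(\|\residualbc\|_{L^\infty(0,t;U)}\bigr);
\end{equation*}
summing the three contributions via the triangle inequality then produces~\eqref{eq::errorBound}. The genuine obstacle — and the motivation for the remainder of the paper — is not this chain of inequalities itself but the fact that $M$, $\omega$, and in particular the shape of $\gamma$ are generally not available in closed form; turning the bound into a practical a posteriori certificate therefore requires the numerical procedures for their estimation that are developed in the subsequent sections.
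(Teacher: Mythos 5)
The theorem you are proving is imported verbatim from \cite{HilU25}; the present paper never re-proves it, it only sketches the ingredients (``Using input-to-state stability (\ISS), semigroup theory, the forthcoming \Cref{ass::well_posed}, and the set of growth functions~$\calK$'') before building a replacement in \Cref{cor::error_estimator}. Your reconstruction matches that sketch in its overall architecture: form the error equation by linearity, split $\stateerr$ into an initial-value contribution, a distributed-forcing contribution, and a boundary contribution, bound the first two with the variation-of-parameters formula and the exponential growth estimate for the semigroup $\S(t)$ generated by $\Agen = \A|_{\ker\bop}$, and absorb the boundary contribution into an \ISS gain $\gamma\in\calK$.

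There is, however, one substantive misattribution that you should repair. You write that the gain $\gamma$ is ``contained in \Cref{ass::well_posed}''; it is not. \Cref{ass::well_posed} only asserts that $\Agen$ generates a $C_0$-semigroup and that $\bop$ admits a bounded linear right inverse $\boprinv$. Neither of these, by itself, delivers an \ISS estimate with a fixed $\calK$-function $\gamma$: as the paper explicitly remarks after \Cref{cor::error_estimator}, the \ISS route ``requires'' exponential stability of the semigroup, a hypothesis that is not part of \Cref{ass::well_posed} and that \cite{HilU25} invokes separately. Without some stability (or restriction to a bounded time horizon $\timeInt = [0,T]$ with a $T$-dependent gain), the claimed uniform bound on $\stateerr_{\mathrm b}$ in terms of $\|\residualbc\|_{L^\infty(0,t;U)}$ does not follow from a bounded right inverse alone. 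So the chain of inequalities you propose is structurally the right one, but the existence of $\gamma$ is an additional input, not a consequence of \Cref{ass::well_posed}, and that gap is precisely what motivates the new bound in \Cref{cor::error_estimator}, which trades the \ISS gain for explicit terms involving $\|\boprinv\|$, $\|\A\boprinv\|$, $\residualbc$, and $\residualbcdot$ at the cost of requiring $\residualbc\in W^{1,p}$.
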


To apply \Cref{thm:errorBound} to practical problems, it is necessary to determine the constants $M$ and $\omega$, as well as the growth function $\gamma$. For simple benchmark problems, such as the heat equation on a bounded one-dimensional domain, these quantities can be computed analytically; see \cite[Sec.,V.A]{HilU25}, which builds on results from \cite{SeiTW22} and \cite{JacNPS18}.
For more complex problems and geometrically challenging domains, however, a direct computation of the \abbr{ISS} functions remain infeasible. The goal of this work is to partially bridge this gap, thereby enabling the application of \Cref{thm:errorBound} to a broader class of problems. Specifically, our main contributions are as follows:
\begin{enumerate}
	\item We show in \Cref{prop::approx_growth_bound} that the constants $M$ and $\omega$ in~\eqref{eq::errorBound} can be approximated via established numerical approximation methods that do not require solving the \BIVP~\eqref{eq::BIVP}. 
	\item Instead of approximating the growth function $\gamma$ in~\eqref{eq::errorBound} numerically, we use the Fattorini trick \cite{Fat68} to derive a modified error bound in \Cref{cor::error_estimator}, which circumvents the need for a suitable growth function $\gamma\in\calK$ to be available. Together with \Cref{prop::approx_growth_bound}, this enables us to compute the error bound for a large class of problems. 
	\item For a one-dimensional heat equation, we prove in \Cref{lem:heatEquation} that the approximated quantities agree with the analytically derived constants from the literature and show that with additional knowledge, the error bound can be further improved; cf.~\Cref{cor:heatEquationErrBound}. We then train a \PINN to approximate the solution of the heat equation and observe an excellent agreement of the error bound with the true error; see \Cref{fig::heat_error2} for details.
	\item We demonstrate the applicability of our error estimator to a more challenging problem by investigating a two-dimensional Stokes flow around a cylindrical obstacle in \Cref{subsec:stokes}. To improve the accuracy of the \PINN approximation, we use harmonic feature embeddings from \cite{KasH24} in a modified fashion, which enables time-dependent Dirichlet boundary conditions. The numerical results confirm that our error estimator provides a rigorous upper bound on the actual prediction error.
\end{enumerate}

We emphasize that the homogenization approach to boundary perturbations, which we employ in the modified error bound, is also used in \cite{GaoZ24}. In contrast to \cite{GaoZ24}, however, our proposed error estimator avoids the use of the Gronwall lemma.

\subsection*{Notation}
The space $\Lp(\timeInt; U)$ denotes the space of p-Bochner-integrable functions from $\timeInt$ to $U$. Further, the space of linear bounded operators from a Banach space $\statespace$ to a Banach space $Y$ is denoted by $\linop{\statespace}{Y}$. If $\statespace=Y$, the we simply write $\linopo{\statespace}$. We use common notation for Sobolev spaces on $\Omega \subseteq \R^n$ and for functions from $I$ to~$\statespace$ denoted by $W^{k, p}(\Omega)$ and $W^{k, p}(I, \statespace)$, respectively. The space of k-times continuously differentiable functions with compact support on $Z \subseteq \R^n$ is denoted by $C^k_c(Z)$.


\section{Prerequisites}
\label{sec::Prerequisites}

For the error bounds in \Cref{thm:errorBound} and the forthcoming \Cref{cor::error_estimator}, existence and uniqueness of solutions of the \BIVPs~\eqref{eq::BIVP} and~\eqref{eq::BIVP_Approx} is presumed. This assumption is justified and elucidated in \Cref{subsec:wellPosed}. We further recall the assumptions for the Trotter-Kato approximation theorem in \Cref{subsec:approxSequence} to reuse these in the approximation results derived in later sections.

\subsection{Well-posedness of the BIVP}
\label{subsec:wellPosed}
Throughout the manuscript, we work with a mild solution concept and follow closely the presentation from \cite[Cha.~10]{CurZ20}. Consider the general \BIVP (as common abstraction for~\eqref{eq::BIVP} and~\eqref{eq::BIVP_Approx})
\begin{equation}
    \label{eq::BIVP:gen}
    \left\{\quad
    \begin{aligned}
        \dot{\stategeneral} & = \A \stategeneral + f \quad & \mathrm{in} \;\timeInt \times \Omega, \\
        \bop \stategeneral &= \stategeneralbc \quad & \mathrm{in}\; \timeInt \times \partial \Omega, \\ 
        \stategeneral &= \stategeneralinit \quad & \mathrm{in} \;\left\lbrace t=0 \right\rbrace \times \Omega.
    \end{aligned}\right. 
\end{equation}

\begin{assumption} 
    \label{ass::well_posed}
    The operator $\Agen \coloneqq \A |_{\ker\bop}$ is the generator of a strongly continuous semigroup~$\S(t)$ on $\statespace$ and $\bop$ has a linear bounded right-inverse $\boprinv \in \linop{\controlspace}{(D(\A), \| \cdot \|_\A)}$.
\end{assumption}

\begin{remark}
    The existence of a linear or non-linear right-inverse of a boundary trace operator has been and is researched extensively. Exemplarily, the works \cite{Gag57, Gre87} give results on conditions when a linear bounded right-inverse of a trace operator exists, as opposed to results stating that there can not exist a linear right-inverse such as in \cite{Pee79, MalSS15}. These results emphasize that an informed choice of both the boundary condition space $\controlspace$ and the solution space $\statespace$ is required to apply the derived results. 
\end{remark}

\begin{example}
	Consider the half-plane $\Omega = \R^n \times (0, \infty )\subseteq \R^{n+1}$ with $n$-dimensional flat boundary, i.e., $\R^n \times \{0\}$. To derive a linear right-inverse for the corresponding Dirichlet boundary trace operator, we introduce the function spaces $\statespace=W^{1,p}(\Omega)$ and $\controlspace = C_c^\infty(\R^n) \backslash \lbrace 0 \rbrace$ (c.f. \cite[Thm. 18.13]{Leo17}). We construct a right inverse of the Dirichlet trace operator based on a $\zeta \in C_c^\infty([0, \infty))$ fulfilling $\zeta(0) = 1$ as   
    \begin{equation*}
        ( \boprinv^{\zeta, \delta} g )(x, x_{n+1})  \coloneqq  g(x)\zeta(x_{n+1}/\delta)
    \end{equation*}
    for $g \in \controlspace$. The construction requires the function $\zeta$ with compact support, i.e., there is a $\hat{x} \in (0, \infty)$ such that $\zeta(\hat{x}) = 0$. This function smoothly decays the value of the boundary into the domain until it reaches 0 at latest at $x_{n+1} = \hat{x}/\delta$. Further, if fulfills by construction $tr ( \boprinv^{\zeta, \delta} g ) = g$.
\end{example}

If $f$ and $\stategeneralbc$ are sufficiently smooth, then the classical solution of~\eqref{eq::BIVP:gen} can be transformed to $\statefattorini \coloneqq \stategeneral - \boprinv \stategeneralbc$ such that the transformed \BIVP reads
\begin{equation}
    \label{eq::BIVP:gen:Fattorini}
    \left\{\quad
    \begin{aligned}
        \dot{\statefattorini} & = \Agen \statefattorini + f - \boprinv \stategeneralbcdot + \A \boprinv \stategeneralbc \quad & \mathrm{in}\; \timeInt \times \Omega ,\\
        \bop z &= 0 \quad & \mathrm{in}\; \timeInt \times \partial\Omega ,\\
        \statefattorini &= \stategeneralinit - \boprinv \stategeneralbc(0) \quad & \mathrm{in}\; \left\lbrace t=0 \right\rbrace \times \Omega.
    \end{aligned}\right.
\end{equation}
Rewriting the \BIVP~\eqref{eq::BIVP:gen} in the form~\eqref{eq::BIVP:gen:Fattorini} is due to \cite{Fat68}, and hence referred to as the Fattorini trick in the literature. 
The transformed \BIVP~\eqref{eq::BIVP:gen:Fattorini} motivates the following definition.

\begin{definition}[Solution concept]
	\label{def:mildSol}
	A map $\stategeneral\colon \timeInt\to \statespace$ is called a \emph{mild solution} for~\eqref{eq::BIVP:gen} if 
	\begin{align*}
		\state(t) = \boprinv \stategeneralbc(t) + \S(t)\big(\stategeneralinit - \boprinv \stategeneralbc(0)\big) + \int_0^t \S(t-s)\big( f(s) - \boprinv \stategeneralbcdot(s) + \A\boprinv \stategeneralbc(s)\big)\ds.
	\end{align*}
\end{definition}

Clearly, every classical solution of~\eqref{eq::BIVP:gen} is also a mild solution. The converse direction is in general not true. Nevertheless, we obtain the following well-posedness result for mild solutions.

\begin{proposition}[{\!\!\cite[Lem.~5.1.5]{CurZ20}}]
	\label{prop:wellPosed}
	Consider the \BIVP~\eqref{eq::BIVP:gen}, let \Cref{ass::well_posed} be satisfied, and assume $\stategeneralinit - \boprinv \stategeneralbc(0) \in D(\Agen)$, $f\in \Lp(\timeInt,\statespace)$, and $\stategeneralbc\in W^{1,p}(\timeInt,\statespace)$ for some $p\geq 1$. Then there exists a unique mild solution of~\eqref{eq::BIVP:gen} and the mild solution is continuous in $\timeInt$.
\end{proposition}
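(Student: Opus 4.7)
The strategy is to reduce~\eqref{eq::BIVP:gen} to an abstract Cauchy problem with homogeneous boundary data via the Fattorini substitution $\statefattorini \vcentcolon= \stategeneral - \boprinv \stategeneralbc$ and then apply the standard variation-of-constants theory for $C_0$-semigroups to the transformed problem~\eqref{eq::BIVP:gen:Fattorini}. Since \Cref{def:mildSol} already prescribes the mild solution by an explicit formula, the task is really to (i) show that every term in this formula is well-defined under the stated regularity, and (ii) establish continuity in $\timeInt$; uniqueness is then automatic.

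First I would verify that the inhomogeneity $g(s) \vcentcolon= f(s) - \boprinv \stategeneralbcdot(s) + \A\boprinv \stategeneralbc(s)$ of~\eqref{eq::BIVP:gen:Fattorini} belongs to $\Lp(\timeInt,\statespace)$. This uses $f\in \Lp(\timeInt,\statespace)$ by hypothesis, $\stategeneralbc,\stategeneralbcdot\in\Lp(\timeInt,\controlspace)$ from $\stategeneralbc\in W^{1,p}(\timeInt,\controlspace)$, and $\boprinv \in \linop{\controlspace}{(D(\A),\|\cdot\|_\A)}$ from \Cref{ass::well_posed}, the latter providing uniform estimates for both $\|\boprinv \stategeneralbcdot(s)\|_\statespace$ and $\|\A\boprinv \stategeneralbc(s)\|_\statespace$ in terms of the $\controlspace$-norms of $\stategeneralbcdot(s)$ and $\stategeneralbc(s)$. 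Since by assumption $\statefattoriniinit \vcentcolon= \stategeneralinit - \boprinv \stategeneralbc(0) \in D(\Agen)$, the variation-of-constants formula
\begin{equation*}
	\statefattorini(t) = \S(t)\statefattoriniinit + \int_0^t \S(t-s) g(s) \ds
\end{equation*}
is well-defined as a Bochner integral in $\statespace$ and constitutes the mild solution of~\eqref{eq::BIVP:gen:Fattorini}. The homogeneous term is continuous by strong continuity of $\S$, and continuity of the convolution integral against an $\Lp$-forcing is obtained by a density argument: approximate $g$ in $\Lp(\timeInt,\statespace)$ by continuous functions for which continuity of the convolution is elementary, and pass to the limit using the local uniform bound $\sup_{s\in[0,T]}\|\S(s)\|_{\linopo{\statespace}}<\infty$ together with Hölder.

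Finally, undoing the substitution yields $\stategeneral(t) = \boprinv \stategeneralbc(t) + \statefattorini(t)$, which matches the formula in \Cref{def:mildSol}. Continuity of $t\mapsto \boprinv \stategeneralbc(t)$ on $\statespace$ follows from boundedness of $\boprinv$ combined with the one-dimensional Sobolev embedding $W^{1,p}(\timeInt,\controlspace)\hook C(\timeInt,\controlspace)$, so $\stategeneral\in C(\timeInt,\statespace)$ as claimed. The main obstacle is the $\Lp$-convolution continuity step, which is standard for $C_0$-semigroups but genuinely requires the density/approximation argument rather than a pointwise estimate, because $\S$ need not be analytic or contractive. Once this regularity point is settled, uniqueness is immediate from the explicitness of the mild-solution formula, and the proposition follows.
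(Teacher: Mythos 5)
The paper does not supply its own proof of this proposition; it is cited verbatim from Curtain--Zwart \cite[Lem.~5.1.5]{CurZ20}, so there is no in-paper argument to compare against. Your reconstruction follows the intended standard route: the Fattorini substitution $\statefattorini = \stategeneral - \boprinv \stategeneralbc$ homogenizes the boundary data, the inhomogeneity $g = f - \boprinv\stategeneralbcdot + \A\boprinv\stategeneralbc$ is placed in $\Lp(\timeInt,\statespace)$ via $\boprinv \in \linop{\controlspace}{(D(\A),\|\cdot\|_\A)}$ and $\stategeneralbc \in W^{1,p}(\timeInt,\controlspace)$, the variation-of-constants formula produces the mild solution of the transformed problem, continuity of the convolution against an $\Lp$-forcing is obtained by approximating $g$ by continuous functions and using the local uniform bound on $\|\S(s)\|_{\linopo{\statespace}}$, and undoing the substitution (with the embedding $W^{1,p}(\timeInt,\controlspace)\hook C(\timeInt,\controlspace)$) reproduces exactly the formula of \Cref{def:mildSol}. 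Uniqueness is indeed tautological, since \Cref{def:mildSol} prescribes the solution by a closed formula.

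One small logical slip you should repair: you write that the variation-of-constants formula is well-defined as a Bochner integral \emph{because} $\statefattoriniinit = \stategeneralinit - \boprinv\stategeneralbc(0) \in D(\Agen)$. Membership of $\statefattoriniinit$ in $D(\Agen)$ has nothing to do with well-definedness of the integral or with continuity of the mild solution; those require only $\statefattoriniinit \in \statespace$ and $g \in L^1(\timeInt,\statespace)$. The stronger hypothesis $\statefattoriniinit \in D(\Agen)$ is carried along from the Curtain--Zwart formulation so that, under additional regularity of $g$, the mild solution is in fact a classical solution of the transformed problem; for the mere existence, uniqueness, and continuity asserted here it is not actually needed. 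Apart from this attribution the argument is correct and is the standard one.
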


\begin{remark}
	One can also define a solution without requiring $\stategeneralbc\in W^{1,p}(\timeInt,\statespace)$; see for instance \cite{Sch20} and the references therein. Following this approach will eventually yield the error bound from \Cref{thm:errorBound} presented in \cite{HilU25}. However, in many practical examples, the boundary function $\statebc$ is smooth. If we further assume the \PINN activation function to be smooth, then also the \PINN boundary error $\residualbc$ is sufficiently smooth. 
\end{remark}

\subsection{Approximation sequences for semigroups}
\label{subsec:approxSequence}
In the following, we use the general approximation setting for the Trotter-Kato theorem in different Banach spaces as proposed in \cite{ItoK98} with notation similar to the one introduced in \cite[Def.~1.1]{Nam23}.

\begin{definition}[Approximation sequence of Banach spaces]
    \label{def::approx_sequence}
    Consider a sequence of triples $(\approxstatespace, \Pn, \En)_{n\in \N}$, each consisting of a Banach space $(\approxstatespace,\|\cdot\|_{\approxstatespace})$, a bounded linear operator $\Pn \colon \statespace \to \approxstatespace$, and a right-inverse to $\Pn$ denoted by $\En \colon \approxstatespace \to \statespace$. We call $(\approxstatespace, \Pn, \En)_{n\in \N}$ an \emph{approximation sequence} for $\statespace$ if
    \begin{enumerate}
    	\item $\lim_{n\rightarrow \infty} \| \Pn \state\|_{\approxstatespace} = \| \state\|_{\statespace}$ for every $\state\in \statespace$ (c.f. \cite[Def. 1.1]{Nam23}), and
    	\item there exist constants $\mup, \mue>0$ such that $\|\Pn\|_{\linop{\statespace}{\approxstatespace}} \le \mup$ and $\|\En \|_{\linop{\approxstatespace}{\statespace}} \le \mue$ for all $n\in\N$.
    \end{enumerate}
\end{definition}

Moreover, we say that $\staten \in \approxstatespace$ converges against $\state \in \statespace$ if  $\|\Pn \state - \staten \|_{\approxstatespace} \narrowinfty 0$; see \cite{Kur70,Nam23}. In this case, we write $\lim_{n\rightarrow\infty} \staten = \state$.

Given an approximation sequence $(\approxstatespace, \Pn, \En)_{n \in \N}$, a semigroup of operators $\S(t)$ on $\statespace$, and a sequence of semigroups of operators $(\Sn(t))_{n\in\N}$ on $\approxstatespace$. Then $(\Sn(t))_{n\in\N}$ is called \emph{convergent} to $\S(t)$ if 
\begin{equation*}
	\lim_{n\rightarrow\infty} \En\Sn(t) \staten = \S(t) \state \quad \text{for all}\;\; t\ge0,
\end{equation*}
for all $\staten\in\approxstatespace$ and $\state\in \statespace$ with $\lim_{n\to\infty} \staten = \state$. We denote convergent semigroups in this sense by $\Sn(t) \narrowinftySOT \S(t)$.


\section{Error estimation and numerical approximation of operator norms and growth bounds}
\label{sec::theory}

With these preparations, we can now state the main theoretical results of this paper, namely a novel a posteriori error bound for \PINNs and a computational method to compute the required constants. We start with the novel error bound by leveraging the mild solution concept from \Cref{def:mildSol} and the underlying Fattorini trick.

\begin{theorem}
    \label{cor::error_estimator}
    Consider the \BIVP~\eqref{eq::BIVP} and its approximation~\eqref{eq::BIVP_Approx}, let \Cref{ass::well_posed} be satisfied, and assume $\stategeneralbc, \residualbc \in W^{1,p}(\timeInt; \statespace)$ and $\residualevo\in \Lp(\timeInt; \statespace)$. Let $\state$ and $\stateapprox$ be the mild solutions of~\eqref{eq::BIVP} and \eqref{eq::BIVP_Approx}, respectively. Then there exist constants $M \geq 1$ and $\omega \in \R$ such that
    \begin{equation*}
        \begin{multlined}
             \|\state(t)-\stateapprox(t)\|_{\statespace}\le \varepsilon(t) \coloneqq \|\boprinv\|_{\linop{\controlspace}{\statespace}}\|\residualbc(t)\|_{\controlspace} + M \mexp{\omega t} \|\residualinit - \boprinv \residualbc(0)\|_{\statespace} \\ 
             + \int_{0}^t M \mexp{\omega (t-s)} \left(\|\A\boprinv \|_{\linop{\controlspace}{\statespace}} \|\residualbc(s) \|_{\controlspace} + \|\boprinv\|_{\linop{\controlspace}{\statespace}} \|\residualbcdot(s)\|_{\controlspace} + \|\residualevo(s)\|_{\statespace} \right) \ds.
        \end{multlined}
    \end{equation*}
\end{theorem}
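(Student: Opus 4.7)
The plan is to apply the Fattorini trick to both BIVPs~\eqref{eq::BIVP} and~\eqref{eq::BIVP_Approx}, write out the corresponding mild solution representations via \Cref{def:mildSol}, subtract them, and bound the result using the standard semigroup growth estimate. By \Cref{ass::well_posed}, the operator $\Agen$ generates a strongly continuous semigroup $\S(t)$, so there exist $M\ge 1$ and $\omega\in\R$ with $\|\S(t)\|_{\linopo{\statespace}} \le M\mexp{\omega t}$ for all $t\ge 0$. These are the constants that will appear in $\varepsilon(t)$.

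First, I would verify existence and uniqueness of the mild solutions of~\eqref{eq::BIVP} and~\eqref{eq::BIVP_Approx}. The regularity assumptions $\statebc,\residualbc\in W^{1,p}(\timeInt;\controlspace)$, $\residualevo\in\Lp(\timeInt;\statespace)$ together with \Cref{ass::well_posed} allow invocation of \Cref{prop:wellPosed} for both problems (implicitly assuming the initial-data compatibility needed there; otherwise an approximation/density argument can be employed). Applying \Cref{def:mildSol} to~\eqref{eq::BIVP} gives
\begin{equation*}
    \state(t) = \boprinv \statebc(t) + \S(t)\bigl(\state_0 - \boprinv \statebc(0)\bigr) + \int_0^t \S(t-s)\bigl(- \boprinv \dot{\statebc}(s) + \A\boprinv \statebc(s)\bigr)\ds,
\end{equation*}
and the analogous formula for~\eqref{eq::BIVP_Approx} has $\statebc$ replaced by $\statebc+\residualbc$, $\state_0$ by $\state_0+\residualinit$, and an additional forcing $\residualevo$ inside the integrand.

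Subtracting the two representations causes all terms involving the unperturbed data $\statebc$ and $\state_0$ to cancel, yielding the clean identity
\begin{equation*}
    \stateapprox(t)-\state(t) = \boprinv \residualbc(t) + \S(t)\bigl(\residualinit - \boprinv \residualbc(0)\bigr) + \int_0^t \S(t-s)\bigl(\residualevo(s) - \boprinv\residualbcdot(s) + \A\boprinv \residualbc(s)\bigr)\ds.
\end{equation*}
Taking the $\statespace$-norm, applying the triangle inequality to each of the three summands and the integrand, using the operator-norm bounds $\|\boprinv\|_{\linop{\controlspace}{\statespace}}$, $\|\A\boprinv\|_{\linop{\controlspace}{\statespace}}$ (finite because $\boprinv$ maps into $(D(\A),\|\cdot\|_{\A})$ by \Cref{ass::well_posed}), and the semigroup estimate $\|\S(t-s)\|_{\linopo{\statespace}}\le M\mexp{\omega(t-s)}$, one obtains precisely the bound $\varepsilon(t)$ in the statement.

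The only non-routine step I anticipate is the bookkeeping required to justify applying \Cref{prop:wellPosed} simultaneously to both BIVPs, in particular ensuring that $\residualinit-\boprinv\residualbc(0)\in D(\Agen)$ so that~\eqref{eq::BIVP_Approx} genuinely admits a mild solution in the sense of \Cref{def:mildSol}. This may either be imposed as an implicit smoothness assumption on the trained \PINN (plausible if the activation is smooth) or handled by approximating $\residualinit,\residualbc$ by smoother data and passing to the limit using the continuity of the right-hand side of the mild-solution formula in its data. Once this is in place, the remainder of the argument is a direct norm estimate.
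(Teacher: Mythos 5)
Your proposal is correct and follows essentially the same route as the paper: invoke \Cref{prop:wellPosed} for well-posedness, observe that the error satisfies a BIVP with data $\residualevo$, $\residualbc$, $\residualinit$ (which you obtain by subtracting the two mild-solution formulas — by linearity this is identical to the paper's step of noting the error is itself a mild solution), and then bound via the semigroup growth estimate and triangle inequality. Your final remark about the compatibility condition $\residualinit-\boprinv\residualbc(0)\in D(\Agen)$ is a fair point that the paper glosses over, but it does not change the structure of the argument.
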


\begin{proof}
	The assumptions together with \Cref{prop:wellPosed} guarantee unique mild solutions of~\eqref{eq::BIVP} and~\eqref{eq::BIVP_Approx}. We define the error $\stateerr \vcentcolon= \stateapprox - \state$ and notice that by construction, $\stateerr$ is the unique mild solution of
	\begin{equation*}
        \left\{\quad
        \begin{aligned}
            \dot{\stateerr} & = \A \stateerr + \residualevo \quad & \mathrm{in} \;\timeInt \times \Omega, \\
            \bop \stateerr &= \residualbc \quad & \mathrm{in}\; \timeInt \times \partial \Omega, \\ 
            \stateerr &= \residualinit \quad & \mathrm{in} \;\left\lbrace t=0 \right\rbrace \times \Omega.
        \end{aligned}\right.
    \end{equation*}
    Consequently, we have
    \begin{align}
    	\label{eqn:error:mildSol}
    	\stateerr(t) = \boprinv\residualbc(t) + \S(t)\big(\residualinit - \boprinv \residualbc(0)\big) + \int_0^t \S(t-s)\big( \residualevo(s) - \boprinv \residualbcdot(s) + \A\boprinv\residualbc(s)\big)\ds.
    \end{align}
    Let $M\geq 1$ and $\omega\in\R$ be the growth parameters of the semigroup  $\S(t)$, i.e., 
    \begin{align*}
   		\|\S(t)\|_{\linopo{\statespace}} \leq M\mathrm{e}^{\omega t}.
   	\end{align*}
   	Taking the norm of~\eqref{eqn:error:mildSol}, applying the triangle inequality, and using the monotonicity of the integral yields
    \begin{align*}
    	\|\stateerr(t)\|_{\statespace}  
        &\leq \|\boprinv\|_{\linop{\controlspace}{\statespace}}\|\residualbc(t)\|_{\controlspace} + M \mexp{\omega t} \|\residualinit - \boprinv \residualbc(0)\|_{\statespace}\\
        &\quad + \int_{0}^t M \mexp{\alpha (t-s)} \left(\|\A\boprinv \|_{\linop{\controlspace}{\statespace}} \|\residualbc(s) \|_{\controlspace} + \|\boprinv\|_{\linop{\controlspace}{\statespace}} \|\residualbcdot(s)\|_{\controlspace} + \|\residualevo(s)\|_{\statespace} \right) \d{s}. \qedhere
    \end{align*}
\end{proof}

In comparison to the error bound from \Cref{thm:errorBound} presented in \cite{HilU25}, wherein \ISS is employed to handle approximation errors on the boundary, the exponential stability (required by \ISS) of the underlying dynamical system is not necessary in the error estimator \Cref{cor::error_estimator}. In return, stronger assumptions are made on the regularity of the boundary perturbation. 
\begin{remark}
    \label{remark::temporal_derivative_boundary_error}
    Since the term $\|\residualbcdot(s)\|_{\controlspace}$ appears in the error bound, it seems reasonable from a \PINN context to include this term in the loss function for the training of the \PINN. 
\end{remark}

\begin{remark}
	If the operator $\boprinv$ is explicitly available, then the error bound can be improved by explicitly computing $\|\residualevo(s) - \boprinv \residualbcdot(s) + \A\boprinv \residualbc(s)\|_{\statespace}$ under the integral without further estimating as in the proof of \Cref{cor::error_estimator}.
\end{remark}

To use the error estimator from \Cref{thm:errorBound} practically, the norms $\|\boprinv\|_{\linop{\controlspace}{\statespace}}$, and $\|\A \boprinv\|_{\linop{\controlspace}{\statespace}}$ as well as the semigroup growth bound parameters $M$ and $\omega$ need to be computed. Since this is in generally not possible analytically, we show in the following how these values can be determined from numerical approximations. Let an approximation sequence of semigroups $(\approxstatespace, \Pn, \En)_{n \in \N}$ and for each $n$ accordingly discretized operators $\An \colon \approxstatespace \to \approxstatespace$ and $\bopn \colon \approxstatespace \to \controlspace$ be given. Further, assume that $\bopn$ has a right inverse denoted by $\bopnrinv \colon \controlspace \to D(\An) \subseteq \approxstatespace$, and that the operators converge in the strong operator topology, i.e.,
\begin{equation*}
    \begin{aligned}
        \En \An \Pn & \narrowinftySOT \A, \qquad\text{and}\qquad
        \En \bopnrinv & \narrowinftySOT \boprinv.
    \end{aligned}
\end{equation*}

Based on this setting, we propose to approximate $M$ and $\omega$ via the Trotter-Kato theorem, see \cite{ItoK98}, as follows.

\begin{theorem} 
    \label{prop::approx_growth_bound}
    Let $(\approxstatespace, \Pn, \En)_{n \in \N}$ be an approximation sequence of the Banach space~$\statespace$. Further, consider semigroups of operators $\S(t)$ on $\statespace$ and $\Sn(t)$ on $\approxstatespace$ for $n\in\N$ with respective growth bounds $\|\Sn(t)\|_{\linopo{\approxstatespace}} \le M_n \mexp{\omega_n t}$. Assume further that
    \begin{itemize}
        \item[(i)] $\Sn(t) \narrowinftySOT \S(t)$ and
        \item[(ii)] $(M_n)_{n\in \N}$ and $(\omega_{n})_{n\in \N}$ are Cauchy sequences.
    \end{itemize}
    Then $\|\S(t)\|_{\linopo{\statespace}} \le M^\star \mexp{\omega^\star t}$ with
    \begin{equation}
        \begin{aligned}
            \omega^\star &\coloneqq \lim_{n\to\infty} \omega_{n} , &
            M^\star &\coloneqq  \mup \mue \tilde{M} , &
            \tilde{M } & \coloneqq  \lim_{n \to \infty} M_n
        \end{aligned}
    \end{equation}
\end{theorem}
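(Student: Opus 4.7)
The plan is to pick an arbitrary $z\in\statespace$, push it through the approximation sequence by setting $z_n \vcentcolon= \Pn z\in\approxstatespace$, apply the given growth bound in $\approxstatespace$ to $\Sn(t) z_n$, then lift the resulting inequality back to $\statespace$ via $\En$ and pass to the limit. Trivially $\lim_{n\to\infty} z_n = z$ in the sense of \Cref{def::approx_sequence} since $\|\Pn z - z_n\|_{\approxstatespace}=0$, so by hypothesis (i) we have $\En \Sn(t) z_n \to \S(t) z$ in $\statespace$.

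The key chain of inequalities I would write is, for each fixed $n\in\N$ and $t\geq 0$,
\begin{equation*}
    \|\En \Sn(t) z_n\|_{\statespace} \leq \mue \|\Sn(t) z_n\|_{\approxstatespace} \leq \mue M_n \mexp{\omega_n t} \|z_n\|_{\approxstatespace} \leq \mue \mup M_n \mexp{\omega_n t} \|z\|_{\statespace},
\end{equation*}
using the uniform bounds $\|\En\|\leq\mue$ and $\|\Pn\|\leq\mup$ from the definition of an approximation sequence together with the prescribed semigroup growth in $\approxstatespace$. The continuity of $\|\cdot\|_{\statespace}$ then gives $\|\En \Sn(t) z_n\|_{\statespace}\to\|\S(t) z\|_{\statespace}$ on the left, and the Cauchy hypothesis (ii) (in $\R$) together with continuity of $(M,\omega)\mapsto M\mexp{\omega t}$ gives $M_n\mexp{\omega_n t}\to \tilde{M}\mexp{\omega^\star t}$ on the right. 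Passing to the limit $n\to\infty$ thus yields
\begin{equation*}
    \|\S(t) z\|_{\statespace} \leq \mue\mup\tilde{M}\mexp{\omega^\star t}\|z\|_{\statespace} = M^\star \mexp{\omega^\star t}\|z\|_{\statespace},
\end{equation*}
and since $z\in\statespace$ was arbitrary, taking the supremum over the unit ball produces the claimed operator-norm bound.

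I do not anticipate a serious obstacle here: the proof is essentially a bookkeeping exercise that threads the uniform bounds on $\Pn,\En$ through the strong-operator convergence provided by hypothesis (i). The one point that merits a brief remark in the write-up is the justification that it suffices to work pointwise in $z$ and then conclude the operator-norm inequality, i.e.\ that no uniformity in $z$ is lost in the limit; this follows because the bounding constant on the right-hand side is independent of $n$ in the final estimate, so the supremum over $\|z\|_{\statespace}\leq 1$ may be taken after the limit without further argument.
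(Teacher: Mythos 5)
Your proposal is correct and follows essentially the same route as the paper's proof: define $z_n := \Pn z$, observe $\lim_{n\to\infty} z_n = z$, invoke hypothesis (i) to pass $\|\En\Sn(t)z_n\|_{\statespace}$ to $\|\S(t)z\|_{\statespace}$, and bound the right-hand side by $\mue\,\mup\,M_n\mexp{\omega_n t}\|z\|_{\statespace}$ before taking the limit. Your write-up is slightly more explicit about the limit-passage and the final supremum over the unit ball, but there is no substantive difference in the argument.
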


\begin{proof}
    For fixed $\state \in \statespace$ define $\staten \coloneqq \Pn \state$. Then, $\lim_{n\to\infty} \staten = \state$ and thus
	\begin{align*}
		\|\S(t) \state\|_{\statespace} &= \lim_{n\to\infty} \|\En \Sn(t) \staten\|_{\statespace} \leq \lim_{n\to\infty} \|\En\|_{\linop{\approxstatespace}{\statespace}} \|\Sn(t)\|_{\linopo{\approxstatespace}} \|\Pn\|_{\linop{\statespace}{\approxstatespace}} \|\state\|_{\statespace}\\
		&\leq \mup \mue \|\state\|_{\statespace} \lim_{n \to\infty} \|\Sn(t)\|_{\linopo{\approxstatespace}}.
	\end{align*}
	We immediately conclude $\|\S(t)\|_{\linopo{\statespace}} \leq \mup\mue \lim_{n\to\infty} \|\Sn(t)\|_{\linopo{\approxstatespace}} \leq M^\star \mexp{\omega^\star t}$.
\end{proof}

Let us emphasize that the assumption $\Sn(t)\narrowinftySOT \S(t)$ is critical, and for its validation in practice, we will use the Trotter-Kato theorem or variants thereof; see \cite{ItoK98}.

Now, the norms $\|\boprinv\|_{\linop{\controlspace}{\statespace}}$, and $\|\A \boprinv\|_{\linop{\controlspace}{\statespace}}$ remain. For this, we observe that since the operators $\A \boprinv$ and $\boprinv$ are bounded from $\controlspace$ to $\statespace$,
their norms can be bounded from above (according to Banach-Steinhaus \cite[Thm.~2.2]{Bre11}) using finite-dimensional approximations, i.e.,
\begin{align*}
    \|\A\boprinv\|_{\linop{\controlspace}{\statespace}} &\le \lim_{n \rightarrow \infty }\mue \|\An \bopnrinv\|_{\linop{\controlspace}{\approxstatespace}} , &
    \|\boprinv\|_{\linop{\controlspace}{\statespace}} &\le \lim_{n \rightarrow \infty } \mue \| \bopnrinv\|_{\linop{\controlspace}{\approxstatespace}}.
\end{align*}

Summarizing the previous discussion yields the following strategy to certify a \PINN approximation. Given a well-posed (in the sense of \Cref{prop:wellPosed}) dynamical system~\eqref{eq::BIVP}, we have to perform the following steps as preparation for the certification.
\begin{enumerate}
	\item \emph{Discretization}: Compute a discretization, i.e., fix the space $\approxstatespace$ and the corresponding approximations $\An$ and $\bopn$ of $\A$ and $\bop$. Construct an approximation sequence $(\approxstatespace, \Pn, \En)_{n \in \N}$ of the Banach space $\statespace$, and show that the sequence of semigroups generated by $\Angen = \An |_{\ker\bopn}$ converges to the semigroup generated by $\Agen = \A |_{\ker\bop}$.
	\item \emph{Estimation of the growth bounds}: Retrieve $M_n$ and $\omega_n$ from the Jordan normal form of $\An$. If $\An$ is diagonalizable, then we can simply set $M_n=1$ and $\omega_n = \real \left( \lambda_{\max}\right)$. Otherwise, i.e., if the matrix $\An$ is defective, we apply \cite[Thm.~10.12]{Hig08} and consider the Schur decomposition of $\An = Q (D + N) Q^\T$ to compute 
    \begin{equation*} 
        \begin{aligned}
        \|\Sn(t)\|_{\approxstatespace} &\le \mexp{\Real\left(\lambda_{\max}\right)t} \sum_{k=0}^\rho \frac{\|N\|^k t^k}{k!} \le \left(\sum_{k=0}^\rho \frac{\|N\|^k}{k!}\right) \mexp{\Real\left(\lambda_{\max}\right)t} \sum_{k=0}^\rho t^k \\&\le M_{n, 0} C_{n, \epsilon}\mexp{\Real\left(\lambda_{\max}\right) + \epsilon}.
        \end{aligned}
    \end{equation*}
    The polynomial in $t$ was absorbed in a slightly bigger growth bound and the additional factor $C_{n, \epsilon}$. Hence, $M_n = \sum_{k=0}^\rho \tfrac{\|N\|^k}{k!} C_{n, \epsilon}$ and $\omega_n = \real{(\lambda_{\max})}+\epsilon$. Alternative, eventually computationally more efficient and stable approaches can be found in \cite{MolV03}.
    Having computed $\omega_n$ and $M_n$ for fixed $n \in \N$, it remains to show that $(M_n)_{n\in\N}$ and $(\omega_n)_{n\in\N}$ are convergent.
	\item \emph{Estimation of operator norms}: Show that $\boprinv$ is a bounded operator from $\controlspace$ to $(D(\A),\|\cdot \|_\A)$ and compute $\|\A\boprinv\|_{\linop{\controlspace}{\statespace}} \le \lim_{n\to\infty} \mue\|\An \bopnrinv\|_{\linop{\controlspace}{\approxstatespace}}$ and $\|\boprinv\|_{\linop{\controlspace}{\statespace}} \le \lim_{n\to\infty} \mue\|\bopnrinv\|_{\linop{\controlspace}{\approxstatespace}}$.
\end{enumerate}
Then, a \PINN prediction at a time point $t$ can be certified by measuring~$\residualevo, \residualbc, \residualbcdot$, and $\residualinit$ and computing the upper bound of the prediction error from \Cref{cor::error_estimator}.
In case mathematical rigor is not required, then an error indicator can be computed by discretizing the problem, computing $M_n$, $\omega_n$, $\|\An\bopnrinv\|_{\linop{\controlspace}{\approxstatespace}}$, and $\|\bopnrinv\|_{\linop{\controlspace}{\approxstatespace}}$, and guess the corresponding limits from the discretized problem.


\section{Numerical examples}
\label{sec::NumEx}

In this section, we illustrate the results from the previous section using two examples. In the first example, presented in \Cref{subsec:heatEquation}, we discuss the one-dimensional heat equation, compute the approximate growth bounds from \Cref{prop::approx_growth_bound} and compare them with the exact results taken from the literature. While this certainly qualifies as an academic toy example, we discuss a more challenging example in \Cref{subsec:stokes}, where we consider the Stokes flow around a cylinder. Both problems are implemented as \PINNs and prepared the second example using the \emph{finite element} (\FE) toolbox \abbr{DOLFINx} v0.9.0 \cite{BarDDHHRRSSW23}. The full code can be found in 
\begin{center}
    \url{https://doi.org/10.5281/zenodo.16793705}.
\end{center}

\subsection{One-dimensional heat equation}
\label{subsec:heatEquation}

We consider the one-dimensional heat equation for $\Omega = [0,1]$ and $\timeInt = [0,0.5]$ given by
\begin{equation}
	\label{eqn:heatEquation}
	\left\{\qquad
    \begin{aligned} 
        \partial_t \state(t, x) & = \alpha \partial^2_x \state(t, x) \quad & \mathrm{for}\; (t, x) &\in \timeInt \times \Omega, \\ 
        \state(0, x) & = \stateinit(x) \quad & \mathrm{for}\; x &\in \Omega , \\ 
        \begin{bmatrix}
        	\state(t, 0)\\
        	\state(t, 1)
        \end{bmatrix} & = \statebc(t) \quad & \mathrm{for}\; t &\in \timeInt,\\
    \end{aligned}\right.
\end{equation}
for some $\alpha>0$ and $\statebc\colon\timeInt\to\R^2$. To cast~\eqref{eqn:heatEquation} in the form~\eqref{eq::BIVP}, we use the Hilbert spaces $\statespace = L^2(\Omega)$ and $\controlspace = \R^2$, and set $\A \state \vcentcolon= \alpha \partial_x^2 \state$ with $D(\A) = H^2(\Omega)$.
The operator $\bop\colon H^1(\Omega)\to \R^2$ is the Dirichlet trace operator, and hence it is easy to observe that $\ker(\bop) = H_0^1(\Omega) $.
Regarding \Cref{ass::well_posed}, we immediately obtain that $\A |_{\ker\bop}$ is the generator of a contraction semigroup with $D(\Agen) = H_0^1(\Omega) \cap H^2(\Omega)$ \cite[Sec. 7.4, Thm. 5]{Eva10}. Moreover, we can define
\begin{align*}
	\boprinv\colon \R^2\to D(\Agen),\qquad \begin{bmatrix}
		\state_{b,1}\\
		\state_{b,2}
	\end{bmatrix}\mapsto \bigg(\Omega\to \R, \quad x\mapsto x \state_{b,2} + (1-x)\state_{b,1}\bigg),
\end{align*}
such that we can conclude that \Cref{ass::well_posed} is satisfied. Further, we obtain $\A\boprinv = 0$, which simplifies the error bound from \Cref{cor::error_estimator}. Moreover, straightforward calculations show $\|\boprinv\|_{\linop{\controlspace}{\statespace}} \leq \tfrac{2}{3}$.
%

With these preparations, we now perform the next steps for the error certification outlined in the end of \Cref{sec::theory}. Following similar ideas as in \cite[Sec.~4.1]{ItoK98}, we use a finite difference scheme for the numerical approximation. In more detail, for $n\in\N$, we define $\deltacoordn \vcentcolon= \tfrac{1}{n+1}$ and $\coordnk{k} \vcentcolon= k \deltacoordn$ such that $\coordnk{0} = 0$ and $\coordnk{n+1} = 1$. We then set $\approxstatespace = \R^{n}$ with the norm
\begin{align*}
	\|\staten\|_{\approxstatespace}^2 \vcentcolon= \deltacoordn \sum_{k=1}^{n} | (\staten)_k |^2
\end{align*}
and accordingly defined inner product, and
\begin{align*}
	(\Pn \state)_k &=
		\frac{1}{\deltacoordn} \int_{\coordnk{k}-\deltacoordn/2}^{\coordnk{k}+\deltacoordn/2} \state(s) \ds, \; \text{for $1 < k < n$},\\
		E_n \staten & =\sum_{k=1}^n \statenk \chi_{[\coordnk{k}-\deltacoordn/2, \,  \coordnk{k}+\deltacoordn/2)},
\end{align*}
where $\chi_{[a,b)}$ is the indicator function for the interval $[a,b)$.
Using centred finite differences for the second derivative, we define the matrices
\begin{align*}
	\Angen &= \frac{\alpha}{\deltacoordn^2} \begin{bmatrix}
		-2 & 1 & 0 & \dots & 0 \\
		1 & -2 & 1 & & \vdots \\ 
		0 & 1 & -2 & & \vdots \\ 
		\vdots &  & \ddots & \ddots& \vdots \\
		& \cdots & 0 & -1 &2
	\end{bmatrix}\in\R^{n\times n}, &
	D_n &= \frac{\alpha}{\deltacoordn^2} \begin{bmatrix}
		1 & 0\\
		0 & 0\\
		0 & \vdots \\
		\vdots & \vdots\\
		\vdots & 0 \\
		0 & 1
	\end{bmatrix}\in \R^{n\times 2}
\end{align*}
and consider the discretized problem
\begin{equation*}
    \left\{\quad\begin{aligned}
        \statendot(t) &= \Angen \staten(t) + D_n \statebc(t),\\ 
        \staten(0) & = \Pn \stateinit. \\ 
    \end{aligned}\right.
\end{equation*}
It is well known that the eigenvalues of $\Angen$ are given by $\lambda_j = -\tfrac{4\alpha}{\deltacoordn^2} \sin^2\big(\tfrac{\pi j}{2(n+1)}\big)$ for $j=1,\ldots,n$ \cite[Sec. 2.10]{LeV07}. Since $\Angen$ is symmetric, we can thus set $M_n = 1$ and
\begin{align*}
	\omega_n = \max_{j=1,\ldots,n} \lambda_j = -\alpha \left(2(n+1) \sin\left(\tfrac{\pi}{2(n+1)}\right)\right)^2
\end{align*}

\begin{lemma}
	\label{lem:heatEquation}
	Let the setting be as in this subsection and let $\Sn(t) = \mexp{\Angen t}$ be the semigroup associated with the matrix $\Angen$ for $n\in\N$. Then
	\begin{enumerate}
		\item the sequence $(\approxstatespace, \Pn, \En)_{n\in \N}$ is an approximation sequence according to \Cref{def::approx_sequence} with  $\mup = \mue = 1$,
		\item $\Sn(t) \narrowinftySOT \S(t)$, and 
		\item $\lim_{n\to\infty} M_n = 1$, $\omega^\star = \lim_{n\to\infty} \omega_n = -\alpha \pi^2$, and $\omega^\star \leq \omega_n$ for all $n\in\N$.
	\end{enumerate}
\end{lemma}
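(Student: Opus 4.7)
The plan is to verify the three claims in order, with most of the effort concentrated in the semigroup convergence in part (ii).

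For part (i), I would begin by observing that $\En$ is an isometry: direct expansion gives $\|\En \staten\|_{L^2(\Omega)}^2 = \sum_{k=1}^n \deltacoordn |\statenk|^2 = \|\staten\|_{\approxstatespace}^2$, so $\mue = 1$. For the averaging operator $\Pn$, Jensen's inequality applied inside each cell bounds $\|\Pn z\|_{\approxstatespace}^2$ by the $L^2$-norm of $z$ over the interior, yielding $\mup = 1$. To obtain the asymptotic equality $\|\Pn z\|_{\approxstatespace}\to \|z\|_{L^2(\Omega)}$, I would note that $\En \Pn z$ is precisely the $L^2$-orthogonal projection of $z$ onto the span of the interior cell indicators, extended by zero on the two boundary strips of width $\deltacoordn/2$. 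As $n\to\infty$, the strips have vanishing measure and the piecewise-constant projections converge to $z$ in $L^2(\Omega)$ by a standard density argument; since $\En$ is an isometry, $\|\Pn z\|_{\approxstatespace} = \|\En \Pn z\|_{L^2(\Omega)}\to \|z\|_{L^2(\Omega)}$.

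For part (ii), I would invoke the Trotter--Kato approximation theorem in the two-Banach-space setting of \cite{ItoK98}, which requires (a) a uniform growth bound on $(\Sn(t))_{n\in\N}$ and (b) consistency on a core of $\Agen$. Bound (a) is immediate: $\Angen$ is real symmetric with non-positive eigenvalues, so $\|\Sn(t)\|_{\linopo{\approxstatespace}} \le 1$ uniformly in $n$. For (b), I would select the core $C^\infty_c(\Omega)$, which is dense in $D(\Agen) = H^1_0(\Omega)\cap H^2(\Omega)$. A standard Taylor expansion shows that for fixed $z\in C^\infty_c(\Omega)$ the cell averages satisfy $(\Pn z)_k = z(\coordnk{k}) + O(\deltacoordn^2)$, and the centred second difference therefore reproduces $\alpha z''(\coordnk{k})$ up to $O(\deltacoordn^2)$. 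Because $z$ vanishes near the boundary, the end rows of $\Angen$ contribute nothing for large $n$, so $\En \An \Pn z \to \Agen z$ in $L^2(\Omega)$. Trotter--Kato then gives $\Sn(t)\narrowinftySOT \S(t)$.

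For part (iii), the symmetry of $\Angen$ makes it orthogonally diagonalisable, so $M_n = 1$ for every $n$ and the corresponding limit is trivial. Using the stated spectrum, $\omega_n = \max_j \lambda_j = -\alpha\bigl(2(n+1)\sin(\tfrac{\pi}{2(n+1)})\bigr)^2$, and the elementary limit $\lim_{x\downarrow 0}\sin(x)/x = 1$ applied at $x = \tfrac{\pi}{2(n+1)}$ yields $\omega^\star = \lim_{n\to\infty}\omega_n = -\alpha\pi^2$. The inequality $\omega^\star \le \omega_n$ then reduces to $\sin(x)\le x$ for $x\ge 0$ at the same point, which squares and rescales to the required bound.

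The main obstacle is the consistency verification in part (ii): one has to choose a core on which the boundary rows of $\Angen$ do not spoil the finite-difference Taylor estimate and on which the cell-averaging induced by $\Pn$ retains second-order accuracy. Picking $C^\infty_c(\Omega)$ eliminates both concerns simultaneously, so that the remaining work is a routine local consistency estimate uniform on the compact support of the test function.
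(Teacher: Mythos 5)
Your proposal is correct and follows the paper's route essentially step for step: isometry/contraction properties for $\En,\Pn$, Trotter--Kato with a consistency check on a dense core of $\Agen$, and the elementary limits for $M_n,\omega_n$. The one notable technical difference is in the consistency step, where you take $\stategeneralapprox=\Pn z$ on a $C^\infty_c(\Omega)$ core and argue via Taylor expansion of cell averages, whereas the paper uses pointwise evaluation $\stategeneralapprox=(z(\coordnk{1}),\ldots,z(\coordnk{n}))$ on $C^2_c(\Omega)$ and bounds the second-difference error by the modulus of continuity of $z''$; both work, but be aware that your inference ``$(\Pn z)_k=z(\coordnk{k})+O(\deltacoordn^2)$, therefore the centred second difference is $O(\deltacoordn^2)$-accurate'' conceals a division by $\deltacoordn^2$ and actually needs the explicit structure of the $O(\deltacoordn^2)$ correction (it equals $\tfrac{\deltacoordn^2}{24}z''(\coordnk{k})+O(\deltacoordn^4)$, itself smooth in $k$, so its second difference is again $O(\deltacoordn^2)$) to close the estimate.
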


\begin{proof}
	We proof each item separately.
	\begin{enumerate}
		\item Direct computation reveals $\Pn\En = \calI_{\approxstatespace}$ for $n\in\N$ and $\langle \En u, \En v\rangle_{\statespace} = \langle u,v\rangle_{\approxstatespace}$, i.e.~$\|\En\|_{\linop{\approxstatespace}{\statespace}} = 1$. 
			Further, using $\big(\int_a^b |f(s)| \ds)^2 \leq (b-a) \int_a^b f^2(s) \ds$, we obtain $\|\Pn \state\|_{\approxstatespace} \leq \|\state\|_\statespace$ and thus $\|\Pn\|_{\linop{\statespace}{\approxstatespace}} = 1$.
			Moreover, we notice that the first property in \Cref{def::approx_sequence} can be shown using the Lebesgue differentiation theorem. 
			Firstly, we observe that $\|\En\staten\|_{\statespace}^2 = \deltacoordn \|\staten\|_{\ell_2}^2 = \|\staten\|_{\approxstatespace}^2$ and
			\begin{align*}
				 \En\Pn\state (s) \narrowinfty \state(s)
			\end{align*}
			almost everywhere by the Lebesgue differentiation theorem. Combining this yields 
			\begin{equation*}
				\lim_{n\rightarrow \infty} \|\Pn \state \|_{\approxstatespace}^2 = \lim_{n \rightarrow\infty} \|\En\Pn\state \|_{\statespace}^2 = \|\state \|_{\statespace}^2 .
			\end{equation*}
		\item To show $\Sn(t) \narrowinftySOT \S(t)$, we will use the Trotter-Kato theoren \cite[Thm.~2.1]{ItoK98}. To simplify notation, we set $\Agen \vcentcolon= \A |_{\ker \bop}$. Instead of working directly with the resolvent, we will use \cite[Prop.~3.1]{ItoK98}. In more detail, we need to show two properties, namely
		\begin{enumerate}
			\item[(a)] that there exists a subset $D \subseteq D(\Agen)$ such that the closure of $D$ is equal to $\statespace$ and such that $\overline{(\lambda_0 \mathcal{I} - \Agen)D} = \statespace$ for some $\lambda_0 > \omega$, wherein $\omega$ denotes the growth factor of the semigroup generated by $\Agen$, and
			\item[(b)] that for all $\state \in D$ there exists a sequence $(\stategeneralapprox)_{n\in\N}$ with $\stategeneralapprox \in D(\Angen)$ such that 
			\begin{equation*}
    			\lim_{n\to\infty} \|\En \stategeneralapprox - \stategeneral \|_{\statespace} = 0 \qquad \text{and}\qquad 
    			\lim_{n\to\infty} \| \En \Angen \stategeneralapprox - \Agen \stategeneral \|_{\statespace} = 0,
			\end{equation*}
		\end{enumerate} 
		For (a), consider $D \vcentcolon= C_c^2(\Omega)\subseteq D(\Agen)$, then since $C_c([0, 1])$ and $C^2([0, 1])$ are dense in $L^2([0, 1])$ (\!\!\cite[Thm.~4.12]{Bre11} and the Weierstrass approximation theorem) the conditions are fulfilled.
		For (b), let $\state \in D$ and set $\stategeneralapprox \vcentcolon= (\stategeneral(x_{n,1}),\ldots, \stategeneral(x_{n,n}))$. Then, $\|\En \stategeneralapprox - \stategeneral \|_{L^2}^2 \narrowinfty 0$ as shown in \cite[Sec. 4.1, Case 2]{ItoK98}, furthermore, we compute
		\begingroup
		\allowdisplaybreaks
		\begin{align*}
		    \| \En \Angen \stategeneralapprox - \Agen \stategeneral \|_{L^2}^2 
			& = \int_0^1 \left( (\En \Angen \stategeneralapprox - \Agen \stategeneral) (x)\right)^2  \d{x} \\
		    & = \sum_{k=1}^{n} \int_{x_{k-1}}^{x_k} \left( \frac{\stategeneral(x_{k+1})-2\stategeneral(x_k) +\stategeneral(x_{k-1})}{(\deltacoordn)^2}  - \stategeneral''(x) \right)^2 \d{x} \\
		    & = \sum_{k=1}^{n} \int_{x_{k-1}}^{x_k} \left( \frac{1}{(\deltacoordn)^2} \int_{x_k}^{x_{k+1}} \int_{\tau- \deltacoordn}^{\tau} \stategeneral''(\sigma) \d{\sigma}\d{\tau} - \stategeneral''(x) \right)^2 \d{x} \\
		    & \le \frac{1}{(\deltacoordn)^4} \sum_{k=1}^{n} \int_{x_{k-1}}^{x_k} \left( \int_{x_k}^{x_{k+1}} \int_{\tau- \deltacoordn}^{\tau} | \stategeneral''(\sigma) - \stategeneral''(x) |\d{\sigma}\d{\tau}\right)^2 \d{x} \\
		    & \overset{(1)}{\le} \frac{1}{(\deltacoordn)^4} \sum_{k=1}^{n} \int_{x_{k-1}}^{x_k} \left( (\deltacoordn)^2 \int_{x_k}^{x_{k+1}} \int_{\tau- \deltacoordn}^{\tau} | \stategeneral''(\sigma) - \stategeneral''(x) |^2 \d{\sigma}\d{\tau}\right) \d{x} \\
		    & \overset{(2)}{\le} \frac{1}{(\deltacoordn)^2}  \sum_{k=1}^{n} \int_{x_{k-1}}^{x_k} \left( \int_{x_k}^{x_{k+1}} \int_{\tau- \deltacoordn}^{\tau} \omega(\stategeneral'', \deltacoordn)^2 \d{\sigma}\d{\tau}\right) \d{x} \\
		    & \le \frac{1}{(\deltacoordn)^2} \sum_{k=1}^{n}  \int_{x_{k-1}}^{x_k} \left( (\deltacoordn)^2 \omega(\stategeneral'', \deltacoordn)^2 \right) \d{x} \\
		    & \le \omega(\stategeneral'', \deltacoordn)^2 
		\end{align*}
		\endgroup
		wherein $(1)$ holds because of the $\Lone$-$\Ltwo$ integral inequality, $(2)$ uses the continuity modulus $\omega$ of $\stategeneral''$. Since the continuity modulus converges to 0 for $\deltacoordn \rightarrow 0$, this shows the remaining inequality.
		We conclude that the assumptions of \cite[Prop.~3.1]{ItoK98} are satisfied, which in turn allows us to use the Trotter-Kato theorem \cite[Thm.~2.1]{ItoK98} to establish convergence of the sequence of semigroups.						

		\item This follows immediately from
			\begin{align*}
				\lim_{n\to\infty} \omega_n = \lim_{x\to 0} -\alpha \left(\pi \frac{\sin(x)}{x}\right)^2 = -\alpha \pi^2.\\[-3em]
			\end{align*}
			\qedhere
	\end{enumerate}
\end{proof}

Let us emphasize that the limit $\omega^\star = -\alpha\pi^2$ resembles the growth bound of the semigroup~$\S(t)$; see for instance \cite[VI.8.9]{EngN00}. The convergence behavior of $\omega_n$ is plotted in \Cref{fig::heat_equation_growth_bound}. In particular, we observe that for this particular approximation, $\omega_n \geq \omega^\star$ for all~$n\in\N$. 

\begin{figure}[ht]
    \centering
    \begin{tikzpicture}
        \begin{axis}[
            xlabel={discretization steps n},
            ylabel={$\omega_n$},
            xmin=0, xmax=8000,
            ymin=-9.88, ymax=-9.82,
            legend pos=north east,
            ymajorgrids=true,
            grid=both,
			grid style={line width=.1pt, dashed, draw=gray!10},
			major grid style={line width=.2pt,draw=gray!50},
			axis lines*=left,
			axis line style={line width=\plotlinewidth},
            width=0.8 \linewidth,
            height=0.3\linewidth
        ]
        
        \addplot[line width=\plotlinewidth,solid,color=c_teal] %
        table[x=n,y=omega,col sep=comma]{figures/heat_equation/omega_growth_bound_limit.csv};
        \addlegendentry{$\omega_n$}
        \addplot[line width=\plotlinewidth, dashed, color=c_wine]
            coordinates {
            (0, -9.8696) (8000, -9.8696)
            };
            \addlegendentry{$\omega^\star$}
        \end{axis}
    \end{tikzpicture}
    \caption{The growth bound $\omega_n$ for the discretized systems with $n$ subdivisions for the 1D heat equation in comparison to the growth bound found in literature $\omega^\star$ \cite[VI.8.9]{EngN00}.}
    \label{fig::heat_equation_growth_bound}
\end{figure}
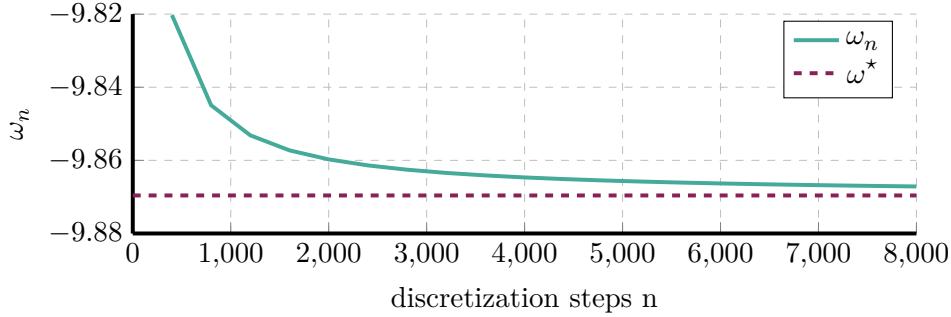

In summary, we obtain the following error bound for the one-dimensional heat equation.
\begin{corollary}
	\label{cor:heatEquationErrBound}
    Consider the one-dimensional heat equation~\eqref{eqn:heatEquation} and let $\stateapprox$ be a numerical approximation of the mild solution $\state$. For a numerical solution to the heat equation on $\Omega = [0, 1]$, the prediction error of the numerical approximation $\tilde{\state}(t)$ to the true solution~$\state(t)$ can be bounded by 
    \begin{equation}
    	\label{eqn:errBound:HeatEquation}
    	\begin{aligned}
    		\| \state(t) - \tilde{\state}(t) \|_{\statespace} 
		& \le \tfrac{2}{3} \|\delta_b(t)\|_{\controlspace} + \mexp{-\alpha \pi^2 t}\|\delta_0 - \boprinv \delta_b(0)\|_{\statespace} \\ 
		& \phantom{=} + \int_0^t \mexp{-\alpha \pi^2 (t-s)}\left(\tfrac{2}{3}\|\dot{\delta}_b(s)\|_{\controlspace} + \|\delta(s)\|_{\statespace} \right)\d{s}.
    	\end{aligned}
    \end{equation}
\end{corollary}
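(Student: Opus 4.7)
The plan is to obtain Corollary \ref{cor:heatEquationErrBound} by a direct application of the general error estimator in \Cref{cor::error_estimator} to the one-dimensional heat equation~\eqref{eqn:heatEquation}, using the specific constants and operators established earlier in \Cref{subsec:heatEquation}.

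First I would verify that the hypotheses of \Cref{cor::error_estimator} are satisfied. \Cref{ass::well_posed} was already checked for the heat equation: the operator $\Agen = \A|_{\ker \bop}$ with domain $H_0^1(\Omega)\cap H^2(\Omega)$ generates a contraction semigroup on $\statespace = L^2(\Omega)$, and the explicit linear interpolation $\boprinv$ provides a bounded right-inverse of the Dirichlet trace into $D(\Agen)$. I would then simply invoke smoothness of the \PINN residuals (which holds, e.g., for $C^\infty$ activation functions) so that $\residualbc \in W^{1,p}(\timeInt; \controlspace)$ and $\residualevo \in L^p(\timeInt; \statespace)$.

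Next, I would plug in the three problem-specific quantities into the bound from \Cref{cor::error_estimator}. Namely, (i) $\|\boprinv\|_{\linop{\controlspace}{\statespace}} \le \tfrac{2}{3}$, obtained from a direct $L^2$-computation on the linear interpolant $x \mapsto x\state_{b,2} + (1-x)\state_{b,1}$; (ii) $\A \boprinv = 0$, since the Laplacian of this affine function vanishes, so the corresponding term in the integral drops out; and (iii) the growth constants $M = 1$, $\omega = -\alpha \pi^2$ for the semigroup $\S(t)$. The latter is exactly the content of \Cref{lem:heatEquation} combined with \Cref{prop::approx_growth_bound}: part (1) of the lemma gives $\mup = \mue = 1$, part (2) provides the semigroup convergence required by \Cref{prop::approx_growth_bound}, and part (3) identifies the limits $\tilde M = 1$ and $\omega^\star = -\alpha\pi^2$, so that $M^\star = \mup\mue \tilde M = 1$. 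Substituting these values into the general bound produces~\eqref{eqn:errBound:HeatEquation} term by term.

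Since this is essentially a substitution, there is no real obstacle beyond making sure the connection between the abstract estimator and the concrete constants is stated cleanly. The only subtle point worth highlighting is that one must invoke \Cref{prop::approx_growth_bound} (rather than a direct passage to the limit in the semigroup norm) to certify that the analytically known growth bound $-\alpha\pi^2$ is indeed admissible in \Cref{cor::error_estimator}; this is precisely what the three items of \Cref{lem:heatEquation} are designed to provide. After these observations the corollary follows.
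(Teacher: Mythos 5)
Your proof is correct and follows exactly the route the paper intends: the corollary is stated without its own proof precisely because it is a direct instantiation of \Cref{cor::error_estimator} with the heat-equation-specific constants $\|\boprinv\|_{\linop{\controlspace}{\statespace}}\le\tfrac23$, $\A\boprinv=0$, $M=1$, and $\omega=-\alpha\pi^2$ supplied by the surrounding discussion, \Cref{lem:heatEquation}, and \Cref{prop::approx_growth_bound}. Your remark that \Cref{prop::approx_growth_bound} (rather than a direct limiting argument) is what legitimizes using $\omega^\star=-\alpha\pi^2$ in the abstract bound is accurate and is precisely what \Cref{lem:heatEquation}(1)--(3) are set up to deliver.
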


We now demonstrate the validity of the proposed error estimator by applying it to a \PINN trained to solve the one-dimensional heat equation for $\alpha = 0.2$. For this purpose, we extend the implementation published with \cite{HilU25} by incorporating the new error estimator, train a neural network, and evaluate it accordingly. The neural network has 4 hidden layers with 10 neurons each. The activation function used is the hyperbolic tangent. The network was trained for $\num{10000}$ epochs with the L-BFGS optimizer from \cite{LiuN89}. The loss is given by 
\begin{align*}
    L(\stateapprox) &= \frac{\alpha_{\mathrm{evo}}}{N_{\mathrm{evo}}} \sum_{i = 0}^{N_{\mathrm{evo}}} \left|\partial_t \stateapprox(t^{\mathrm{evo}}_i, x^{\mathrm{evo}}_i) - 0.2\cdot \partial_x^2 \stateapprox(t^{\mathrm{evo}}_i, x^{\mathrm{evo}}_i) \right| + \frac{\alpha_{\mathrm{init}}}{N_{\mathrm{init}}} \sum_{i = 0}^{N_{\mathrm{init}}} \left|\stateapprox(0, x^{\mathrm{init}}_i) - \stateinit(x_i) \right| \\
    & \phantom{=} + \frac{\alpha_{\mathrm{bc, 1}}}{N_{\mathrm{bc}}} \sum_{i = 0}^{N_{\mathrm{bc}}} \left| \stateapprox(t^{\mathrm{bc}}_i, x^{\mathrm{bc}}_i) \right| + \frac{\alpha_{\mathrm{bc, 2}}}{N_{\mathrm{bc}}} \sum_{i = 0}^{N_{\mathrm{bc}}} \left| \partial_t \stateapprox(t^{\mathrm{bc}}_i, x^{\mathrm{bc}}_i) \right|.
\end{align*}
The last two loss terms correspond to the boundary error and its derivative, in particular $x^{\mathrm{bc}}_i \in \{ 0, 1\}$. Note that in agreement with the findings in \Cref{sec::theory} we explicitly add the temporal derivative of the boundary condition to the loss term. The collocation and training points used were generated using latin hypercube sampling \cite{McKBC79} with $N_{\mathrm{evo}} = \num{5000}$, $N_{\mathrm{init}} = \num{200}$ and $N_{\mathrm{bc}} = \num{100}$.
The individual loss terms are weighted by $\alpha_{\mathrm{init}} = \alpha_{\mathrm{evo}} = \tfrac{1}{2}$, $\alpha_{\mathrm{bc, 1}} = \num{20}$, and $\alpha_{\mathrm{bc, 2}} = \num{200}$. The resulting \PINN prediction shows good agreement with the true solution across the entire space-time domain. 

To evaluate the prediction error of the \PINN, we denote the different contributions to the error estimator~\eqref{eqn:errBound:HeatEquation} as
\begin{equation}\label{eq::heat_contrib}
	\begin{aligned}
		\varepsilon_{\mathrm{init}} &\coloneqq \mexp{-\alpha \pi^2 t}\|\delta_0 - \boprinv \delta_b(0)\|_{\statespace}, \quad &
		\varepsilon_{b, [t]} &\coloneqq \tfrac{2}{3} \| \delta_b(t)\|_{\controlspace},\\
		\varepsilon_{b, \int} &\coloneqq  \int_0^t \mexp{-\alpha \pi^2 (t-s)}\tfrac{2}{3}\|\dot{\delta}_b(s)\|_{\controlspace} \d{s}, \quad &
		\varepsilon_{\mathrm{evo}} &\coloneqq  \int_0^t \mexp{-\alpha \pi^2 (t-s)} \|\delta(s)\|_{\statespace}\d{s}, \\
	\end{aligned}
\end{equation}
such that the overall error estimator is given by
\begin{align*}
	\varepsilon_{\text{tot}} =  \varepsilon_{b, [t]} + \varepsilon_{\mathrm{init}} + \varepsilon_{b, \int} + \varepsilon_{\mathrm{evo}};
\end{align*}
cf.~\Cref{cor:heatEquationErrBound}.
The results are displayed in \Cref{fig::heat_error2}, where it is evident that the error estimator (solid purple line) provides a true upper bound to the actual error of the \PINN prediction (solid green line). The figure also highlights that omitting the boundary error contributions leads to a substantial underestimation of the prediction error. Vice versa, the consideration of the boundary error in the prediction, as we propose it in this contribution, is essential for the validity of the error estimator.
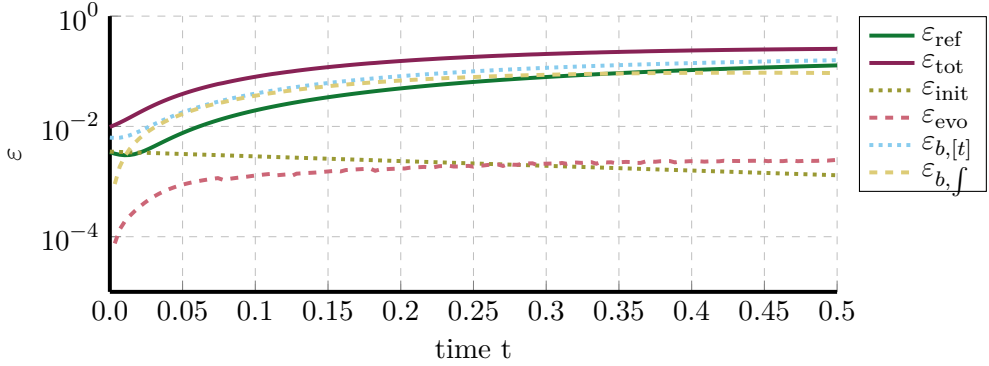
\begin{figure}[ht]
    \centering
    \begin{tikzpicture}
        \begin{axis}[
            xlabel={time t},
            ylabel={$\varepsilon$},
            xmin=0, xmax=0.5,
            legend pos=outer north east,
            legend style = {cells={anchor=west}},
            ymajorgrids=true,
            grid=both,
			grid style={line width=.1pt, dashed, draw=gray!10},
			major grid style={line width=.2pt,draw=gray!50},
			axis lines*=left,
			axis line style={line width=\plotlinewidth},
			ymode=log,
			ymin = 1e-5, ymax = 1,
            grid style=dashed,
            width=0.75\linewidth,
            height=0.35\linewidth,
            xtick={0.0, 0.05, 0.1, 0.15, 0.2, 0.25, 0.3, 0.35, 0.4, 0.45, 0.5},
            xticklabels={0.0, 0.05, 0.1, 0.15, 0.2, 0.25, 0.3, 0.35, 0.4, 0.45, 0.5},
        ]
        
        \addplot[line width=\plotlinewidth,solid,color=c_ref] %
        table[x=t,y=E_ref,col sep=comma]{figures/heat_equation/heat_equation_soft_bc_run_tanh_test_data_t_error_over_time.csv};
        \addlegendentry{$\varepsilon_{\text{ref}}$}
        
        \addplot[line width=\plotlinewidth,solid,color=c_tot] %
        table[x=t,y=E_tot,col sep=comma]{figures/heat_equation/heat_equation_soft_bc_run_tanh_test_data_t_error_over_time.csv};
        \addlegendentry{$\varepsilon_{\text{tot}}$}

        \addplot[line width=\plotlinewidth,dotted,color=c_init] %
        table[x=t,y=E_init,col sep=comma]{figures/heat_equation/heat_equation_soft_bc_run_tanh_test_data_t_error_over_time.csv};
        \addlegendentry{$\varepsilon_{\text{init}}$}

        \addplot[line width=\plotlinewidth,dashed,color=c_evo] %
        table[x=t,y=E_PI,col sep=comma]{figures/heat_equation/heat_equation_soft_bc_run_tanh_test_data_t_error_over_time.csv};
        \addlegendentry{$\varepsilon_{\text{evo}}$}

        \addplot[line width=\plotlinewidth,dotted,color=c_bt] %
        table[x=t,y=E_bc_sum_ubt,col sep=comma]{figures/heat_equation/heat_equation_soft_bc_run_tanh_test_data_t_error_over_time.csv};
        \addlegendentry{$\varepsilon_{b, [t]}$}
        
        \addplot[line width=\plotlinewidth,dashed,color=c_int_1] %
        table[x=t,y=E_bc_int_ubdot,col sep=comma]{figures/heat_equation/heat_equation_soft_bc_run_tanh_test_data_t_error_over_time.csv};
        \addlegendentry{$\varepsilon_{b, \int}$}
        
        \end{axis}
    \end{tikzpicture}
    \caption{Contributions~\eqref{eq::heat_contrib} of the error estimator, their sum $\varepsilon_{\text{tot}}$ in comparison to the true error $\varepsilon_{\text{ref}}$.}
    \label{fig::heat_error2}
\end{figure}

The error estimator and the first results for the heat equation suggests that including the temporal derivative of the boundary errors additionally to the losses used conventionally for \PINNs with soft boundary constraints allows us to fine-tune the training parametrization. Beyond penalizing the boundary error significantly (in our case 40 times more than the error of the state evolution) as used previously in \cite{HilU25}, we investigate numerically how the relation 
\begin{equation*} 
    \lambda_{\mathrm{bc}} \coloneqq \alpha_{\mathrm{bc, 2}} / \alpha_{\mathrm{bc, 1}}
\end{equation*}
affects the training result and the error estimator. The factor $\alpha_{\mathrm{bc, 1}}$ is kept constant at $20$. We then investigate
\begin{equation*} 
    \lambda_{\mathrm{bc}} \in \left\lbrace \tfrac{1}{10}, 1, 3.14, 10 \right\rbrace.
\end{equation*}
The value $\lambda_{\mathrm{bc}} = 3.14$ was chosen to equilibrate the weightings of the two boundary error contributions in the error estimator, i.e.,
\begin{equation*}
    \lambda_{\mathrm{bc}} = 3.14 \approx \frac{\tfrac{2}{3}}{\int_0^t \mexp{-\alpha \pi^2 (t-s)}\tfrac{2}{3}\ds}
\end{equation*}
for the latest time of the time range, i.e., $t = 0.5$. The ratio between the two boundary error contributions over time for the prediction error are presented in \Cref{fig::error_ratio_lambda}. We do not observe a significant modification in the ratios between the two error contributions for varying $\lambda_{\mathrm{bc}}$. This is reasonable as we do not expect the approximation by the \PINN to be highly oscillating in time. In cases where high temporal oscillations in time are expected, a suitable weighting of the loss terms might be advantageous during training to ensure that the contributions to the error estimator are equally weighted.

\begin{figure}
    \centering 
    \begin{tikzpicture}
        \begin{axis}[
            xlabel={time t},
            ylabel={$\frac{\varepsilon_{\mathrm{b}, [t]}}{\varepsilon_{\mathrm{b}, \int}}$},
            xmin=0.025, xmax=0.5,
            legend pos=outer north east,
            legend style={
        		cells={anchor=west},
   			},
            ymajorgrids=true,
            grid=both,
			grid style={line width=.1pt, dashed, draw=gray!10},
			major grid style={line width=.2pt,draw=gray!50},
			axis lines*=left,
			axis line style={line width=\plotlinewidth},
            ylabel style={rotate=-90},
            xtick={0.0, 0.05, 0.1, 0.15, 0.2, 0.25, 0.3, 0.35, 0.4, 0.45, 0.5},
            xticklabels={0.0, 0.05, 0.1, 0.15, 0.2, 0.25, 0.3, 0.35, 0.4, 0.45, 0.5},
            grid style=dashed,
            width=0.75\linewidth,
            height=0.3\linewidth
        ]

        \addplot[line width=\plotlinewidth,solid,color=c_green] %
        table[x=t,y=bc_sububt_by_intubdot,col sep=comma]{figures/heat_equation/heat_equation_soft_bc_param5_run_tanh_test_data_t_error_over_time.csv};
        \addlegendentry{$\lambda_{\mathrm{bc}} = 10^2$}
        
        \addplot[line width=\plotlinewidth,dotted,color=c_rose] %
        table[x=t,y=bc_sububt_by_intubdot,col sep=comma]{figures/heat_equation/heat_equation_soft_bc_run_tanh_test_data_t_error_over_time.csv};
        \addlegendentry{$\lambda_{\mathrm{bc}} = 10^1$}

        \addplot[line width=\plotlinewidth,dashed,color=c_rose] %
        table[x=t,y=bc_sububt_by_intubdot,col sep=comma]{figures/heat_equation/heat_equation_soft_bc_param3_run_tanh_test_data_t_error_over_time.csv};
        \addlegendentry{$\lambda_{\mathrm{bc}} = 3.14$}

        \addplot[line width=\plotlinewidth,solid,color=c_wine] %
        table[x=t,y=bc_sububt_by_intubdot,col sep=comma]{figures/heat_equation/heat_equation_soft_bc_param2_run_tanh_test_data_t_error_over_time.csv};
        \addlegendentry{$\lambda_{\mathrm{bc}} = 10^0$}

        \addplot[line width=\plotlinewidth,dotted,color=c_teal] %
        table[x=t,y=bc_sububt_by_intubdot,col sep=comma]{figures/heat_equation/heat_equation_soft_bc_param4_run_tanh_test_data_t_error_over_time.csv};
        \addlegendentry{$\lambda_{\mathrm{bc}} = 10^{-1}$}
        \end{axis}
    \end{tikzpicture}
    \caption{Ratio between the boundary error contributions for five neural networks trained with different $\lambda_{\mathrm{bc}}$.}
    \label{fig::error_ratio_lambda}
\end{figure}
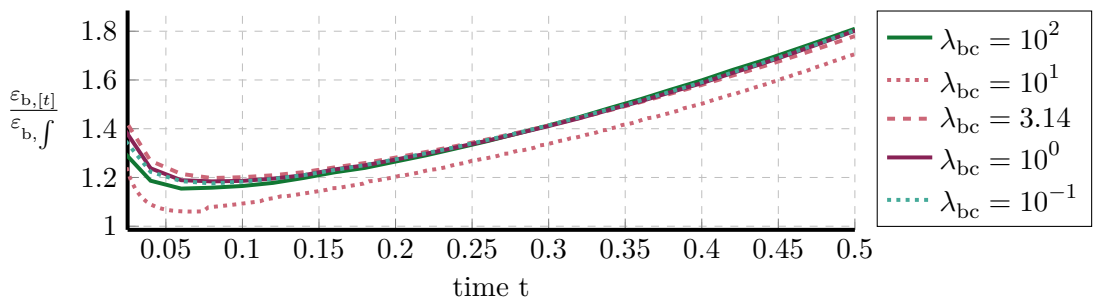

\subsection{Stokes flow around a cylinder}
\label{subsec:stokes}

In this subsection, we use the two-dimensional Stokes equation 
\begin{equation}
	\label{eqn:Stokes}
	\left\{\qquad
    \begin{aligned}
        \partial_t \vel + \nabla p - \mu \Delta \vel  &= 0, \quad \text{in }  \timeInt\times\Omega,\\
        \nabla \cdot \vel &= 0, \quad \text{in } \timeInt\times\Omega.
    \end{aligned}
    \right.
\end{equation}
with $\vel(t) \in \R^2$ and $p(t) \in \R$ denoting the velocity and pressure, respectively, on a domain with an obstacle to illustrate the applicability of the presented results for a problem on a complicated domain. The viscosity is set to $\mu = 0.001$. To fit the Stokes equation~\eqref{eqn:Stokes} into the framework described in the previous sections, we use the projection defined by the Helmholtz-Hodge decomposition \cite{ChoM93}. In more detail, let $P\colon\Ltwo(\Omega)\to \Ltwo_\sigma(\Omega)$ denote the projection onto the subspace of divergence free square integrable functions. Then, the Stokes operator is given as $\A = \mu P\Delta$ and the problem reads 
\begin{equation*}
    \partial_t\vel = \mu P\Delta\vel \quad \text{in }\Omega
\end{equation*}
which matches the desired form \eqref{eq::BIVP}. To simplify the estimation of $\omega, M, \|\boprinv\|_{\linop{\controlspace}{\statespace}}$, and $\|\A\boprinv\|_{\linop{\controlspace}{\statespace}}$, we perform all computations for their estimation in the space $\Ltwo(\Omega)$, i.e., we neglect the projection $P$. Since $P$ is a projection, the computed quantities will be upper bounds on the desired values. 

The domain under investigation, inspired by the benchmark \cite{SchTDKR96}, is depicted in \Cref{fig::stokesdomain}. The problem has Dirichlet boundary conditions on the walls, the inlet, and at the obstacle, the outlet has Neumann boundary conditions. The inlet is time-dependent and given by
\begin{equation*}
    \vel(x=0, y, t) = \left(\begin{matrix}
        6 \sin\left(\tfrac{\pi t}{8}\right)\tfrac{y (H-y)}{H^2}\\
        0
    \end{matrix}\right)
\end{equation*}
where $H$ denotes the height of the domain. The full Navier-Stokes equations for a similar geometry were implemented as a tutorial in the framework \abbr{DOLFINx} \cite{BarDDHHRRSSW23}, which is slightly modified to produce a suitable reference solution for the Stokes equation under consideration. 

\begin{figure}
	\centering
	\begin{subfigure}[t]{.43\linewidth}
		\centering
		\begin{tikzpicture}[scale=4]
            \def\L{1.2}     
            \def\H{0.41}    
            \def\r{0.05}    
            \def\cx{0.2}    
            \def\cy{0.2}    

            \draw[fill=gray!30, draw=white] (0,0) rectangle (\L, -0.1);
            \draw[fill=gray!30, draw=white] (0,\H) rectangle (\L, \H+0.1);
            \draw[thick] (0,0) rectangle (\L,\H);

            \draw[fill=gray!30, draw=black] (\cx,\cy) circle (\r);

            \draw[->] (0,0) -- (\L+0.1,0) node[right] {$x$};
            \draw[->] (0,0) -- (0,\H+0.1) node[above] {$y$};

            \draw[<->] (0,-0.05) -- (\L,-0.05) node[font=\tiny, midway, below] {$L = 1.2$};
            \draw[<->] (0.85,0) -- (0.85,\H) node[font=\tiny, midway, left] {$H = 0.41$};

            \draw[<->] (\cx, \cy) -- (\cx, \cy-0.05) node[font=\tiny, midway, below] {$r = 0.05$};
            \node at (\cx,\cy+0.1) [font=\tiny, anchor=center] {Obstacle};

            \node[font=\tiny, anchor=east] at (0,\H/2) {Inlet};
            \node[font=\tiny, anchor=west] at (\L,\H/2) {Outlet};
        \end{tikzpicture}
    	\caption{The domain $\Omega$ with homogeneous {Dirichlet} boundary conditions at the grey areas, time-dependent Dirichlet boundary condition at the inlet, and homogeneous Neumann boundary conditions at the outlet.}
	\end{subfigure}\hfill
	\begin{subfigure}[t]{.53\linewidth}
		\centering    
        \begin{tikzpicture}[scale=4]
            \def\L{1.2}     
            \def\dL{0.5}     
            \def\H{0.41}    
            \def\r{0.05}    
            \def\cx{0.2}    
            \def\cy{0.2}    

            \draw[fill=gray!30, draw=white] (0-\dL,0) rectangle (\L, -0.1);
            \draw[fill=gray!30, draw=white] (0-\dL,\H) rectangle (\L, \H+0.1);
            \draw[thick] (0,0) rectangle (\L,\H);
            \draw[thick, pattern=north east lines, pattern color=c_teal] (0,0) rectangle (-\dL,\H);

            \draw[fill=gray!30, draw=black] (\cx,\cy) circle (\r);

            \draw[->] (0,0) -- (\L+0.1,0) node[right] {$x$};
            \draw[->] (0,0) -- (0,\H+0.1) node[above] {$y$};

            \draw[<->] (0,-0.05) -- (\L,-0.05) node[font=\tiny, midway, below] {$L = 1.2$};
            \draw[<->] (-\dL,-0.05) -- (0,-0.05) node[font=\tiny, midway, below] {$\delta L = 0.5$};
            \draw[<->] (0.85,0) -- (0.85,\H) node[font=\tiny, midway, left] {$H = 0.41$};

            \draw[<->] (\cx, \cy) -- (\cx, \cy-0.05) node[font=\tiny, midway, below] {$r = 0.05$};
            \node at (\cx,\cy+0.1) [font=\tiny, anchor=center] {Obstacle};

            \node[font=\tiny, anchor=east] at (0,\H/2) {Inlet};
            \node[font=\tiny, anchor=west] at (\L,\H/2) {Outlet};
        \end{tikzpicture}
        \caption{The domain is extended by the region shaded in green to enable the computation of harmonic feature embeddings. Neumann boundary conditions are imposed on the modified inlet and the original outlet of the extended domain.}
    \end{subfigure}
    \caption{Computational setup for the two-dimensional Stokes flow around a cylinder.}
    \label{fig::stokesdomain}
\end{figure}
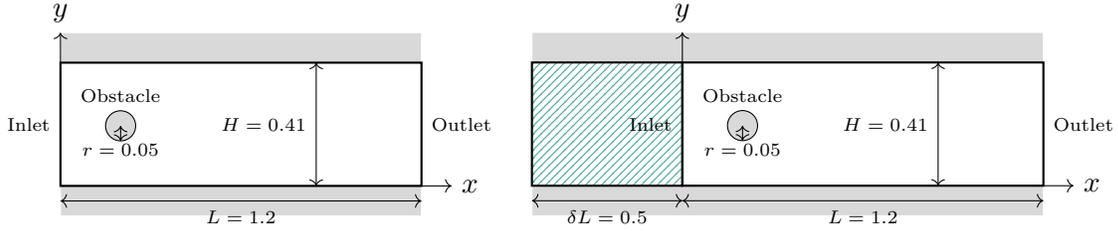

\subsubsection{Norm approximation of the semigroup and operators}
\label{subsub::ParamDeterminationStokes}

To derive the necessary values for the growth bounds and the operator norms, we leverage an \FE discretization, such that we can compute $\|\An\|_{\linop{\approxstatespace}{\approxstatespace}}$ and, with a small additional construction, $\|\An \bopnrinv \|_{\linop{\controlspace}{\approxstatespace}}$. For this, consider the weak formulation of the problem without boundary conditions, i.e.,
\begin{equation*}
    \left\langle \partial_{t} u , v \right\rangle = a_S(u, v),
\end{equation*}
where we write for simplicity $a_S$ for the bilinear form associated to the Stokes operator without considering boundary conditions and $\langle \cdot, \cdot \rangle$ for the duality pairing. The matrix representation of the bilinear form $a_S$ without enforcing boundary conditions yields $\An$. 
Conventionally, Dirichlet boundary conditions are enforced directly (c.f.~\cite{ZieT13}) and Neumann boundary conditions are enforced naturally. Nonetheless, since we explicitly need the spatially discretized boundary operator $\bopn$ and a right-inverse $\bopnrinv$, we proceed differently and construct these operators explicitly. The technical derivation of these matrices is presented in \Cref{app::boundarymat}. 

We use \abbr{DOLFINx} to generate meshes of varying resolution to compute~$\|\An \bopnrinv \|_{\linop{\controlspace}{\approxstatespace}}$ and $\|\bopnrinv \|_{\linop{\controlspace}{\approxstatespace}}$ for different numbers of vertices in the mesh $n$. The cell sizes are controlled during meshing by restricting the maximally allowed cell size. To observe the convergence of the numerically determined operator norms, we depict the results in \Cref{fig::stokes_params} and observe the following (estimated) limit values
\begin{equation*}
    \begin{aligned}
        \omega^\ast &\approx 0.014, &
        \|\boprinv\|_{\linop{\statespace}{\controlspace}} &\approx 0.19, &
        \|\A \boprinv\|_{\linop{\statespace}{\controlspace}} &\approx 0.001.
    \end{aligned}
\end{equation*}
The limits are also depicted in \Cref{fig::stokes_params} as dashed purple lines. Since $\An$ is symmetric we conclude $M_n = 1$ for all $n\in\N$ and thus $\lim_{n\to\infty} M_n = 1$. We use these values for the evaluation of the error estimator.

\begin{figure}
    \centering 
    \begin{tikzpicture}

        \definecolor{darkgrey176}{RGB}{176,176,176}

        \begin{groupplot}[
        group style={group size=1 by 3,   
            xticklabels at=edge bottom,
            vertical sep=12pt,
        }, 
        width=0.8\textwidth,
        ]
        \nextgroupplot[
        height=5cm,
        log basis x={10},
        tick align=outside,
        tick pos=left,
        grid=both,
			grid style={line width=.1pt, dashed, draw=gray!10},
			major grid style={line width=.2pt,draw=gray!50},
			axis lines*=left,
			axis line style={line width=\plotlinewidth},
        x grid style={darkgrey176},
        xmin=560, xmax=100000,
        xmode=log,
        xtick style={color=black},
        y grid style={darkgrey176},
        ylabel={\small \(\displaystyle \omega_n\)},
        ymin=0, ymax=0.272695677846172,
        ytick style={color=black},
        ytick={0, 0.1, 0.2},
        yticklabels = {0, 0.100, 0.200},
        ]
        \addplot [line width=\plotlinewidth, color=c_teal, dotted, mark=+, mark size=3, mark options={solid}]
        table {%
        564 0.260358051774104
        633 0.150049655091436
        834 0.100130993734146
        1163 0.0775816940569736
        1629 0.0622897007499808
        2185 0.0529468818933207
        2936 0.0454057453020293
        3830 0.0401163497209935
        4819 0.0355195829487871
        5917 0.0329349398178706
        7092 0.0301221057323283
        8410 0.0277372364254192
        9845 0.0260918661770781
        11355 0.0242723098406486
        13068 0.0229554015443177
        14807 0.0220359098280353
        16730 0.020485381457815
        18640 0.019559180382388
        20593 0.0183038774969163
        22902 0.0180909812854528
        25272 0.0172730841109303
        27889 0.0171705171854216
        30283 0.0170255711760419
        32927 0.0160951791443815
        35824 0.0164683659603743
        38643 0.015331379956323
        41515 0.015288905719471
        44592 0.0152317969671353
        47992 0.0151663109475751
        51176 0.0151655916766899
        54804 0.0153972024315652
        58413 0.0138699096629253
        61781 0.0151846647672484
        65562 0.0145202696856033
        69347 0.0143516625721193
        73581 0.0145102779561559
        77771 0.0147775412085069
        81950 0.0144583716924344
        86259 0.0138762869190927
        90533 0.0136055303327393
        95355 0.0142110366288704
        99851 0.0139789872411679
        };
        \addplot [line width=\plotlinewidth, color=c_wine, dashed]
        table {%
        564 0.014
        100000 0.014
        };

        \nextgroupplot[
            height=5cm,
        log basis x={10},
        tick align=outside,
        tick pos=left,
        x grid style={darkgrey176},
        grid=both,
			grid style={line width=.1pt, dashed, draw=gray!10},
			major grid style={line width=.2pt,draw=gray!50},
			axis lines*=left,
			axis line style={line width=\plotlinewidth},
        xmin=560, xmax=100000,
        xmode=log,
        xtick style={color=black},
        y grid style={darkgrey176},
        ylabel={\small \(\displaystyle \|\mathfrak{D}_{n,0}\|_{\linop{\approxstatespace}{\controlspace}}\)},
        ymin=0.168197722634604, ymax=0.193145475795954,
        ytick style={color=black},
        ytick={0.170, 0.180, 0.190},
        yticklabels = {0.170, 0.180, 0.190},
        ]
        \addplot [line width=\plotlinewidth, color=c_teal, dotted, mark=+, mark size=3, mark options={solid}]
        table {%
        564 0.169331711414665
        633 0.179719773622872
        834 0.18297345223594
        1163 0.183182542969389
        1629 0.184358888431656
        2185 0.183899370971699
        2936 0.186656304856748
        3830 0.182184600160994
        4819 0.186105307262569
        5917 0.186249514458968
        7092 0.186594387330678
        8410 0.1881876692373
        9845 0.186750554805575
        11355 0.186808737176654
        13068 0.181954613071687
        14807 0.170310623410692
        16730 0.186125738255162
        18640 0.187227528855748
        20593 0.187124629660761
        22902 0.187031718858555
        25272 0.188154690270759
        27889 0.180501792386773
        30283 0.186537425242128
        32927 0.179675996116463
        35824 0.192011487015893
        38643 0.186413367579428
        41515 0.187271141586306
        44592 0.18727909936631
        47992 0.187362528574029
        51176 0.187292506485032
        54804 0.187386294656529
        58413 0.179056126938952
        61781 0.18739643748411
        65562 0.187410700349915
        69347 0.172085302261448
        73581 0.181007838875163
        77771 0.187418192124589
        81950 0.187536700476661
        86259 0.187478378622577
        90533 0.187422910141135
        95355 0.187490935812797
        99851 0.187581913696612
        };
        \addplot [line width=\plotlinewidth, color=c_wine, dashed]
        table {%
        564 0.19
        100000 0.19
        };

        \nextgroupplot[
            height=5cm,
            log basis x={10},
            tick align=outside,
            tick pos=left,
            grid=both,
			grid style={line width=.1pt, dashed, draw=gray!10},
			major grid style={line width=.2pt,draw=gray!50},
			axis lines*=left,
			axis line style={line width=\plotlinewidth},
            x grid style={darkgrey176},
            xlabel={\small{number of vertices $n$ in the mesh}},
            xmin=560, xmax=100000,
            xmode=log,
            xtick style={color=black},
            y grid style={darkgrey176},
            ylabel={\small \(\displaystyle \|\mathfrak{A}_n \mathfrak{D}_{n,0}\|_{\linop{\approxstatespace}{\controlspace}}\)},
            ymin=0, ymax=0.00721603899364489,
            ytick style={color=black},
            ytick={0, 0.002, 0.004, 0.006},
            yticklabels = {0, 0.002, 0.004, 0.006},
            scaled ticks = false
        ]
        \addplot [line width=\plotlinewidth, color=c_teal, dotted, mark=+, mark size=3, mark options={solid}]
        table {%
        564 0.00691636961250921
        633 0.00425373308742784
        834 0.00379772141087914
        1163 0.00309253127216058
        1629 0.00275929521677537
        2185 0.00206429498639372
        2936 0.00186948011019869
        3830 0.00165174027756141
        4819 0.00147002946177631
        5917 0.00144835989845069
        7092 0.0014715826909362
        8410 0.0011663428638326
        9845 0.00124163163629679
        11355 0.00115038540175669
        13068 0.00114782862002423
        14807 0.00113398826238778
        16730 0.0011498646729133
        18640 0.00116933431084633
        20593 0.00103507340941228
        22902 0.00112257229058042
        25272 0.00108204248016105
        27889 0.00109029448860967
        30283 0.00100105253755209
        32927 0.0010559275780585
        35824 0.00102544559627561
        38643 0.00101351044027792
        41515 0.00102894267488489
        44592 0.000960373143092605
        47992 0.000999250779634745
        51176 0.000959857642141892
        54804 0.00106976723705943
        58413 0.000982928092875311
        61781 0.00105561448878511
        65562 0.00099482843005233
        69347 0.000996131150925958
        73581 0.000962688530044692
        77771 0.000954967091586838
        81950 0.000929936074651692
        86259 0.000993425847342674
        90533 0.00092298198979563
        95355 0.000946126125655954
        99851 0.000986022047649769
        };
        \addplot [line width=\plotlinewidth, color=c_wine, dashed]
        table {%
        564 0.001
        100000 0.001
        };
        \end{groupplot}

    \end{tikzpicture}
    \caption{Numerically determined values of $\omega, M, \|\A\boprinv\|_{\linop{\controlspace}{\statespace}}$, and $\|\boprinv\|_{\linop{\controlspace}{\statespace}}$ for the Stokes equation on the rectangular domain with a circular hole.}
    \label{fig::stokes_params}
\end{figure}

\subsubsection{\PINN implementation} 
\label{subsub::PINNImplStokes}
We implemented the Stokes flow around the cylinder in a \PINN in the framework published with \cite{HilU25} using harmonic feature embeddings \cite{KasH24} to inform the \PINN of the topology of the considered domain. The core idea of harmonic feature embeddings is that the spatial coordinates are encoded by the Laplace--Beltrami eigenfunctions. Let $\psi_{\NN}$ denote the evaluation of a general neural network and $\phi_{1, ..., n}$ denote the Laplace--Beltrami eigenfunctions. For the two dimensional case at hand, this means that instead of optimizing the \NN hyperparameters such that 
\begin{equation*}
    \psi_{\NN}(x, y, t) 
\end{equation*}
fulfills some data-driven and some physics-informed constraints, we aim for optimizing the {\NN}s hyperparameters such that 
\begin{equation*}
    \psi_{\NN, \abbr{HF}}(x, y, t) \coloneqq \tilde{\psi}_{\NN}(\phi_1(x, y), ..., \phi_n(x, y) , t)
\end{equation*}
fulfills the same data-driven and physics-informed constraints. A suitable network architecture is depicted in \Cref{fig:network_architecture}. It is worth noting, that the second layer of the network (surrounded by the blue box) has no trainable hyperparameters but is completely determined by the domains eigenfunctions. 

For the simplicity of implementation, the neural network used in the provided code does not have the harmonic feature embedding integrated but relies on a preprocessing of the input by the eigenfunctions of the Laplace--Beltrami operator. For proper derivatives w.r.t.~the space coordinates we further provide the neural network during training with the spatial derivatives of the eigenfunctions. Details can be retrieved from the code or from \cite{KasH24}. 

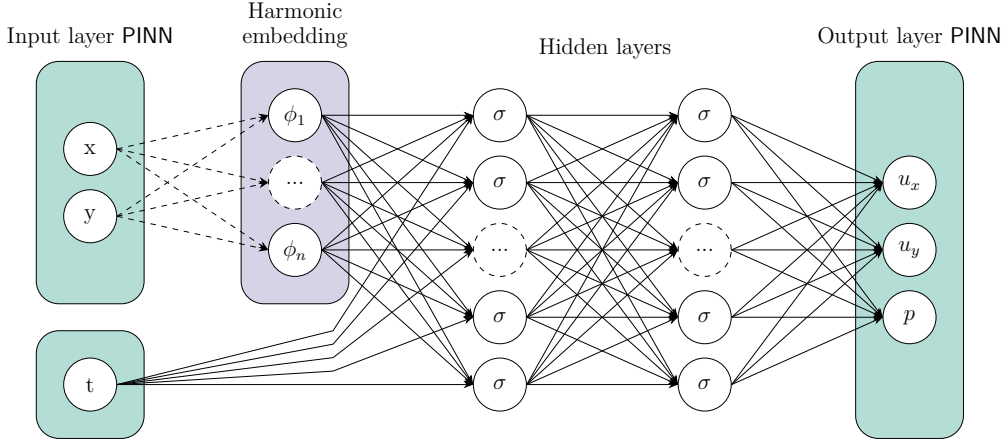
\begin{figure}[!ht]
\centering
\resizebox{0.9\textwidth}{!}{%
    \begin{tikzpicture}
        \tikzstyle{every node}=[font=\large]
        
        \def\xInputLayer{5.5}  
        \def\dxLayer{3.8}  
        
        \def\yCenterNode{12} 
        \def\dyNodes{1.25} 
        \def\dyBoxWidth{1} 

        \def\nodeRadius{28} 

        \draw [ rounded corners = 12.0, fill=c_indigo!20] (\xInputLayer-\dyBoxWidth, \yCenterNode + 2*\dyNodes + \dyBoxWidth) rectangle (\xInputLayer+\dyBoxWidth, \yCenterNode - 0*\dyNodes - \dyBoxWidth);
        \draw [ rounded corners = 12.0, fill=c_teal!40] (\xInputLayer-\dxLayer-\dyBoxWidth, \yCenterNode + 2*\dyNodes + \dyBoxWidth) rectangle (\xInputLayer+\dyBoxWidth-\dxLayer, \yCenterNode - 0*\dyNodes - \dyBoxWidth);
        \draw [ rounded corners = 12.0, fill=c_teal!40] (\xInputLayer- \dxLayer-\dyBoxWidth,\yCenterNode - 2*\dyNodes+\dyBoxWidth) rectangle (\xInputLayer-\dxLayer+\dyBoxWidth,\yCenterNode - 2*\dyNodes-\dyBoxWidth);
        \node [font=\large] at (\xInputLayer -\dxLayer ,\yCenterNode + 3*\dyNodes + 0.2) {Input layer \PINN};
        \node [font=\large] at (\xInputLayer,\yCenterNode + 3*\dyNodes + 0.2) {embedding};
        \node [font=\large] at (\xInputLayer,\yCenterNode + 3*\dyNodes + 0.2 + 0.5) {Harmonic};
        \node[circle, draw, fill=white, minimum size=\nodeRadius] (xnode) at (\xInputLayer-\dxLayer,\yCenterNode+1.5*\dyNodes) {\large x};
        \node[circle, draw, fill=white, minimum size=\nodeRadius] (ynode) at (\xInputLayer-\dxLayer,\yCenterNode+0.5*\dyNodes) {\large y};

        \node[circle, draw, minimum size=\nodeRadius, fill=white] (phi_1) at (\xInputLayer,\yCenterNode + 2*\dyNodes) {$\phi_1$};
        \node[circle, draw, dashed, minimum size=\nodeRadius, fill=white] (phi_3) at (\xInputLayer, \yCenterNode + \dyNodes) {...};
        \node[circle, draw, minimum size=\nodeRadius, fill=white] (phi_5) at (\xInputLayer,\yCenterNode +0*\dyNodes) {$\phi_n$};

        \node[circle, draw, minimum size=\nodeRadius, fill=white] (tnode) at (\xInputLayer-\dxLayer,\yCenterNode - 2*\dyNodes) {t};

        \node (interrupt5) at (\xInputLayer + 0.15*\dxLayer, \yCenterNode-2*\dyNodes) {};
        \node (interrupt4) at (\xInputLayer + 0.15*\dxLayer, \yCenterNode-1.8*\dyNodes) {};
        \node (interrupt3) at (\xInputLayer + 0.15*\dxLayer, \yCenterNode-1.6*\dyNodes) {};
        \node (interrupt2) at (\xInputLayer + 0.15*\dxLayer, \yCenterNode-1.4*\dyNodes) {};
        \node (interrupt1) at (\xInputLayer + 0.15*\dxLayer, \yCenterNode-1.2*\dyNodes) {};

        \node [font=\large] at (11.25,\yCenterNode + 3*\dyNodes) {Hidden layers};

        \node[circle, draw, minimum size=\nodeRadius] (sigma_11) at (\xInputLayer+\dxLayer,\yCenterNode + 2*\dyNodes) {$\sigma$};
        \node[circle, draw, minimum size=\nodeRadius] (sigma_12) at (\xInputLayer+\dxLayer,\yCenterNode + 1*\dyNodes) {$\sigma$};
        \node[circle, draw, dashed, minimum size=\nodeRadius] (sigma_13) at (\xInputLayer+\dxLayer, \yCenterNode - 0*\dyNodes) {...};
        \node[circle, draw, minimum size=\nodeRadius] (sigma_14) at (\xInputLayer+\dxLayer,\yCenterNode - 1*\dyNodes) {$\sigma$};
        \node[circle, draw, minimum size=\nodeRadius] (sigma_15) at (\xInputLayer+\dxLayer,\yCenterNode - 2*\dyNodes) {$\sigma$};

        \node[circle, draw, minimum size=\nodeRadius] (sigma_21) at (\xInputLayer+2*\dxLayer,\yCenterNode + 2*\dyNodes) {$\sigma$};
        \node[circle, draw, minimum size=\nodeRadius] (sigma_22) at (\xInputLayer+2*\dxLayer,\yCenterNode + 1*\dyNodes) {$\sigma$};
        \node[circle, draw, dashed, minimum size=\nodeRadius] (sigma_23) at (\xInputLayer+2*\dxLayer, \yCenterNode - 0*\dyNodes) {...};
        \node[circle, draw, minimum size=\nodeRadius] (sigma_24) at (\xInputLayer+2*\dxLayer,\yCenterNode - 1*\dyNodes) {$\sigma$};
        \node[circle, draw, minimum size=\nodeRadius] (sigma_25) at (\xInputLayer+2*\dxLayer,\yCenterNode - 2*\dyNodes) {$\sigma$};

        \draw [ rounded corners = 12.0, fill=c_teal!40] (\xInputLayer+3*\dxLayer-\dyBoxWidth,\yCenterNode + 2*\dyNodes + \dyBoxWidth) rectangle (\xInputLayer+3*\dxLayer+\dyBoxWidth,\yCenterNode - 2*\dyNodes - \dyBoxWidth);
        \node [font=\large] at (\xInputLayer+3*\dxLayer ,\yCenterNode + 3*\dyNodes + 0.2) {Output layer \PINN};

        \node[circle, draw, minimum size=\nodeRadius, fill=white] (out1) at (\xInputLayer+3*\dxLayer,\yCenterNode + 1*\dyNodes) {$u_x$};
        \node[circle, draw, minimum size=\nodeRadius, fill=white] (out2) at (\xInputLayer+3*\dxLayer, \yCenterNode - 0*\dyNodes) {$u_y$};
        \node[circle, draw, minimum size=\nodeRadius, fill=white] (out3) at (\xInputLayer+3*\dxLayer,\yCenterNode - 1*\dyNodes) {$p$};        

        \draw [->, >=Stealth, dashed] (xnode.east) -- (phi_1.west);
        \draw [->, >=Stealth, dashed] (xnode.east) -- (phi_3.west);
        \draw [->, >=Stealth, dashed] (xnode.east) -- (phi_5.west);

        \draw [->, >=Stealth, dashed] (ynode.east) -- (phi_1.west);
        \draw [->, >=Stealth, dashed] (ynode.east) -- (phi_3.west);
        \draw [->, >=Stealth, dashed] (ynode.east) -- (phi_5.west);

        \draw [->, >=Stealth] (phi_1.east) -- (sigma_11.west);
        \draw [->, >=Stealth] (phi_1.east) -- (sigma_12.west);
        \draw [->, >=Stealth] (phi_1.east) -- (sigma_13.west);
        \draw [->, >=Stealth] (phi_1.east) -- (sigma_14.west);
        \draw [->, >=Stealth] (phi_1.east) -- (sigma_15.west);


        \draw [->, >=Stealth] (phi_3.east) -- (sigma_11.west);
        \draw [->, >=Stealth] (phi_3.east) -- (sigma_12.west);
        \draw [->, >=Stealth] (phi_3.east) -- (sigma_13.west);
        \draw [->, >=Stealth] (phi_3.east) -- (sigma_14.west);
        \draw [->, >=Stealth] (phi_3.east) -- (sigma_15.west);


        \draw [->, >=Stealth] (phi_5.east) -- (sigma_11.west);
        \draw [->, >=Stealth] (phi_5.east) -- (sigma_12.west);
        \draw [->, >=Stealth] (phi_5.east) -- (sigma_13.west);
        \draw [->, >=Stealth] (phi_5.east) -- (sigma_14.west);
        \draw [->, >=Stealth] (phi_5.east) -- (sigma_15.west);

        \draw [->, >=Stealth] (tnode.east) -- (interrupt1.east) -- (sigma_11.west);
        \draw [->, >=Stealth] (tnode.east) -- (interrupt2.east) -- (sigma_12.west);
        \draw [->, >=Stealth] (tnode.east) -- (interrupt3.east) -- (sigma_13.west);
        \draw [->, >=Stealth] (tnode.east) -- (interrupt4.east) -- (sigma_14.west);
        \draw [->, >=Stealth] (tnode.east) -- (interrupt5.east) -- (sigma_15.west);

        \draw [->, >=Stealth] (sigma_11.east) -- (sigma_21.west);
        \draw [->, >=Stealth] (sigma_11.east) -- (sigma_22.west);
        \draw [->, >=Stealth] (sigma_11.east) -- (sigma_23.west);
        \draw [->, >=Stealth] (sigma_11.east) -- (sigma_24.west);
        \draw [->, >=Stealth] (sigma_11.east) -- (sigma_25.west);

        \draw [->, >=Stealth] (sigma_12.east) -- (sigma_21.west);
        \draw [->, >=Stealth] (sigma_12.east) -- (sigma_22.west);
        \draw [->, >=Stealth] (sigma_12.east) -- (sigma_23.west);
        \draw [->, >=Stealth] (sigma_12.east) -- (sigma_24.west);
        \draw [->, >=Stealth] (sigma_12.east) -- (sigma_25.west);

        \draw [->, >=Stealth] (sigma_13.east) -- (sigma_21.west);
        \draw [->, >=Stealth] (sigma_13.east) -- (sigma_22.west);
        \draw [->, >=Stealth] (sigma_13.east) -- (sigma_23.west);
        \draw [->, >=Stealth] (sigma_13.east) -- (sigma_24.west);
        \draw [->, >=Stealth] (sigma_13.east) -- (sigma_25.west);

        \draw [->, >=Stealth] (sigma_14.east) -- (sigma_21.west);
        \draw [->, >=Stealth] (sigma_14.east) -- (sigma_22.west);
        \draw [->, >=Stealth] (sigma_14.east) -- (sigma_23.west);
        \draw [->, >=Stealth] (sigma_14.east) -- (sigma_24.west);
        \draw [->, >=Stealth] (sigma_14.east) -- (sigma_25.west);

        \draw [->, >=Stealth] (sigma_15.east) -- (sigma_21.west);
        \draw [->, >=Stealth] (sigma_15.east) -- (sigma_22.west);
        \draw [->, >=Stealth] (sigma_15.east) -- (sigma_23.west);
        \draw [->, >=Stealth] (sigma_15.east) -- (sigma_24.west);
        \draw [->, >=Stealth] (sigma_15.east) -- (sigma_25.west);

        \draw [->, >=Stealth] (sigma_21.east) -- (out1.west);
        \draw [->, >=Stealth] (sigma_21.east) -- (out2.west);
        \draw [->, >=Stealth] (sigma_21.east) -- (out3.west);

        \draw [->, >=Stealth] (sigma_22.east) -- (out1.west);
        \draw [->, >=Stealth] (sigma_22.east) -- (out2.west);
        \draw [->, >=Stealth] (sigma_22.east) -- (out3.west);

        \draw [->, >=Stealth] (sigma_23.east) -- (out1.west);
        \draw [->, >=Stealth] (sigma_23.east) -- (out2.west);
        \draw [->, >=Stealth] (sigma_23.east) -- (out3.west);

        \draw [->, >=Stealth] (sigma_24.east) -- (out1.west);
        \draw [->, >=Stealth] (sigma_24.east) -- (out2.west);
        \draw [->, >=Stealth] (sigma_24.east) -- (out3.west);

        \draw [->, >=Stealth] (sigma_25.east) -- (out1.west);
        \draw [->, >=Stealth] (sigma_25.east) -- (out2.west);
        \draw [->, >=Stealth] (sigma_25.east) -- (out3.west);
    \end{tikzpicture}
    }%
\caption{\PINN used for the implementation of the Stokes flow around a cylinder. The dashed lines between the spatial variables $x$ and $y$ and the input layer of the \PINN indicate, that this is performed outside of the \PINN implementation.}
\label{fig:network_architecture}
\end{figure}

The \PINNs loss function contains multiple contributions, given by
\begin{subequations}
\begin{align}
    L(\tilde{\vel}, \tilde{p}) &= \frac{\alpha_{\text{evo}}}{N_{\text{evo}}}\sum_{i=1}^{N_{\text{evo}}} \left\| \partial_t \tilde{\vel}(t_i^{\text{evo}}, \mathbf{x}^{\text{evo}}_i) + \nabla \tilde{p}(t^{\text{evo}}_i,\mathbf{x}^{\text{evo}}_i) - \mu \Delta \tilde{\vel}(t_i^{\text{evo}}, \mathbf{x}_i^{\text{evo}})\right\|_2 \label{eq::loss_evo} \\
    &+ \frac{\alpha_{\text{div}}}{N_{\text{div}}} \sum_{i=1}^{N_{\text{div}}} \left|\nabla \cdot \tilde{\vel} (t_i^{\text{div}}, \mathbf{x}_i^{\text{div}})\right| \label{eq::loss_div} \\ 
    &+ \frac{\alpha_{\text{inlet}}}{N_{\text{inlet}}} \sum_{i=1}^{N_{\text{inlet}}} \left\|\tilde{\vel}(t_i^{\text{inlet}}, \mathbf{x_i}^{\text{inlet}}) - \vel_{\text{inlet}}(t_i^{\text{inlet}}, \mathbf{x_i}^{\text{inlet}}) \right\|_2 \label{eq::loss_inlet} \\
    &+ \frac{\alpha_{\text{walls}}}{N_{\text{walls}}} \sum_{i=1}^{N_{\text{walls}}} \left\|\tilde{\vel}(t_i^{\text{walls}}, \mathbf{x}_i^{\text{walls}}) \right\|_2 + \frac{\alpha_{\text{outlet}}}{N_{\text{outlet}}} \sum_{i=1}^{N_{\text{outlet}}} \left\|\partial_{x}\tilde{\vel}(t_i^{\text{outlet}}, \mathbf{x_i}^{\text{outlet}}) \right\|_2 \label{eq::loss_walls_and_outlet} \\
    &+ \frac{\alpha_{\text{init}}}{N_{\text{init}}} \sum_{i=1}^{N_{\text{init}}} \left\|\tilde{\vel}(t_i^{\text{init}}, \mathbf{x_i}^{\text{init}}) \right\|_2. \label{eq::loss_init}
\end{align}
\end{subequations}
Here, we have similarily to \cite{HilU25} contributions from the evolution equation \eqref{eq::loss_evo}, which is characteristic for all neural network in the category \PINN. Furthermore, we have contributions \eqref{eq::loss_div}, \eqref{eq::loss_inlet}, and \eqref{eq::loss_walls_and_outlet} which are soft constraints to reach a \PINN prediction which is in the desired function space. The last term \eqref{eq::loss_init} enforces the initial condition. 

We use the leading 25 Laplace--Beltrami eigenfunctions and construct a neural network with three additional hidden layers with 40 neurons each. We train it using \abbr{Adam} \cite{KinB15} for \num{10000} epochs with a learning rate of $0.01$ using the  weightings
{\small
\begin{align*}
	N_{\mathrm{walls}} &= \num{1200}, &
	N_{\mathrm{outlet}} &= \num{400}, &
	N_{\mathrm{inlet}} &= \num{400}, &
	N_{\mathrm{init}} &= \num{10750}, &
	N_{\mathrm{evo}} &= \num{10649}, &
	N_{\mathrm{div}} &=\num{10649}.\\
	\alpha_{\mathrm{walls}} &= 10, &
	\alpha_{\mathrm{outlet}} &= 0, &
	\alpha_{\mathrm{inlet}} &= 5, &
	\alpha_{\mathrm{init}} &= 0.5, &
	\alpha_{\mathrm{evo}} &= 0.5,  &
	\alpha_{\mathrm{div}} &= 10.
\end{align*}}

\begin{remark}\label{remark::Dirichlet_harmonic_features}
    The treatment of constant Dirichlet boundary conditions is one of the key benefits of the harmonic feature embedding from \cite{KasH24}. Namely, the Dirichlet boundary values remain spatially constant for a fixed point in time, i.e.,
    \begin{equation*}
        \begin{aligned}
            \tilde{\vel}(x, y, t) &= \mathbf{c}(t) \quad &\text{for } \begin{cases} y = 0, \\y = 0.41,\\ (x-c_x)^2 + (y-c_y)^2 = r^2,\end{cases}\\
        \end{aligned}
    \end{equation*}
    where $c_x = c_y = 0.2$ is the center of the obstacle. The time dependency remains, as time is not embedded via the harmonic feature map but rather passed separately as an input to the neural network, alongside the projected eigenfunction values. Therefore, it is still necessary to enforce
    \begin{equation*}
        \mathbf{c}(t) = 0 
    \end{equation*}
    through the loss function.
\end{remark}

\begin{remark}
    In contrast to the problems considered in \cite{KasH24}, the system we investigate here includes time-dependent boundary conditions. As a result, the harmonic feature embeddings must be constructed in a slightly modified way. Specifically, we extend the computational domain beyond the inlet, where the time dependent boundary conditions are prescribed; see \Cref{fig::stokesdomain} (b). This extension enables the \PINN to effectively learn the temporal behavior at the inlet while preserving awareness of the topology of the domain, in particular the presence of the cylindrical obstacle. Nonetheless, for generating the training data for the neural network, the eigenmodes of the the Laplace--Beltrami, computed on the extended domain, are evaluated only at points within the original domain.
\end{remark}

\subsubsection{Numerical results}
\label{subsub::NumericalResultsStokes}

Qualitatively, the trained \PINN exhibits good agreement with the reference solution obtained using the \emph{finite element method} (\FEM) with linear elements, computed via \abbr{DOLFINx}; see \Cref{fig::Stokes_results_in_u}. Nevertheless, the figure also reveals noticeable discrepancies between the predictions of the two methods. We applied the proposed error estimator and compared it to the reference error, defined as the $\Ltwo$-distance to the \abbr{FEM} prediction. Similarly as before, we denote the individual contributions to the error estimator as 
\begin{equation}\label{eq::stokes_contrib}
	\begin{aligned}
		\varepsilon_{\mathrm{init}} &\coloneqq \mexp{\omega t}\|\delta_0\|_{\statespace}, & 
		\varepsilon_{b, \int, 1} &\coloneqq  \int_0^t \mexp{\omega (t-s)}\|\A \boprinv\|_{\linop{\controlspace}{\statespace}}\|\delta_b(s)\|_{\controlspace} \d{s},\\
        \varepsilon_{b, [0]} &\coloneqq \mexp{\omega t}\|\boprinv \delta_b(0)\|_{\statespace}, & 
        \varepsilon_{b, \int, 2} &\coloneqq  \int_0^t \mexp{\omega (t-s)}\|\boprinv\|_{\linop{\controlspace}{\statespace}}\|\dot{\delta}_b(s)\|_{\controlspace} \d{s}, \\
		\varepsilon_{b, [t]} &\coloneqq \|\boprinv\|_{\linop{\controlspace}{\statespace}} \| \delta_b(t)\|_{\controlspace}, & 
		\varepsilon_{\mathrm{evo}} &\coloneqq  \int_0^t \mexp{\omega (t-s)} \|\delta(s)\|_{\statespace}\d{s}. \\
	\end{aligned}
\end{equation}
The results are depicted in \Cref{fig::Stokes_error}, which illustrates the validity of the error estimator and its applicability to problems on complicated domains. In contrast to the previous example, we observe the overall error is dominated by the \PDE residual term.

\begin{figure}
    \centering
    \includegraphics[width=0.48\linewidth]{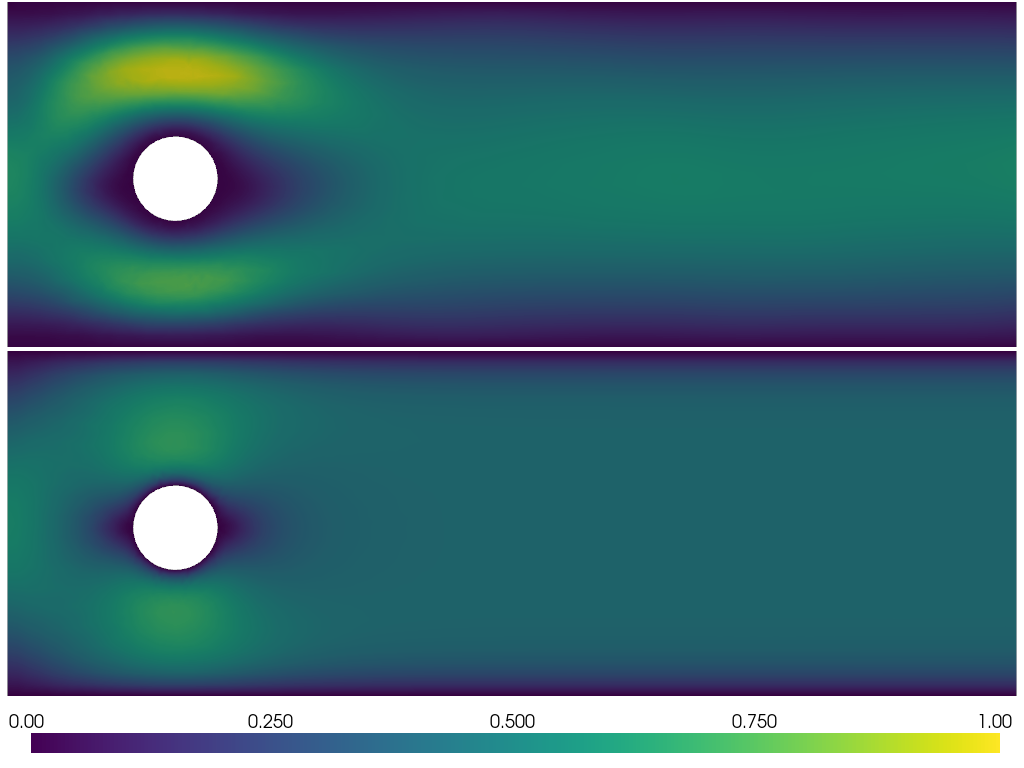}
    \includegraphics[width=0.48\linewidth]{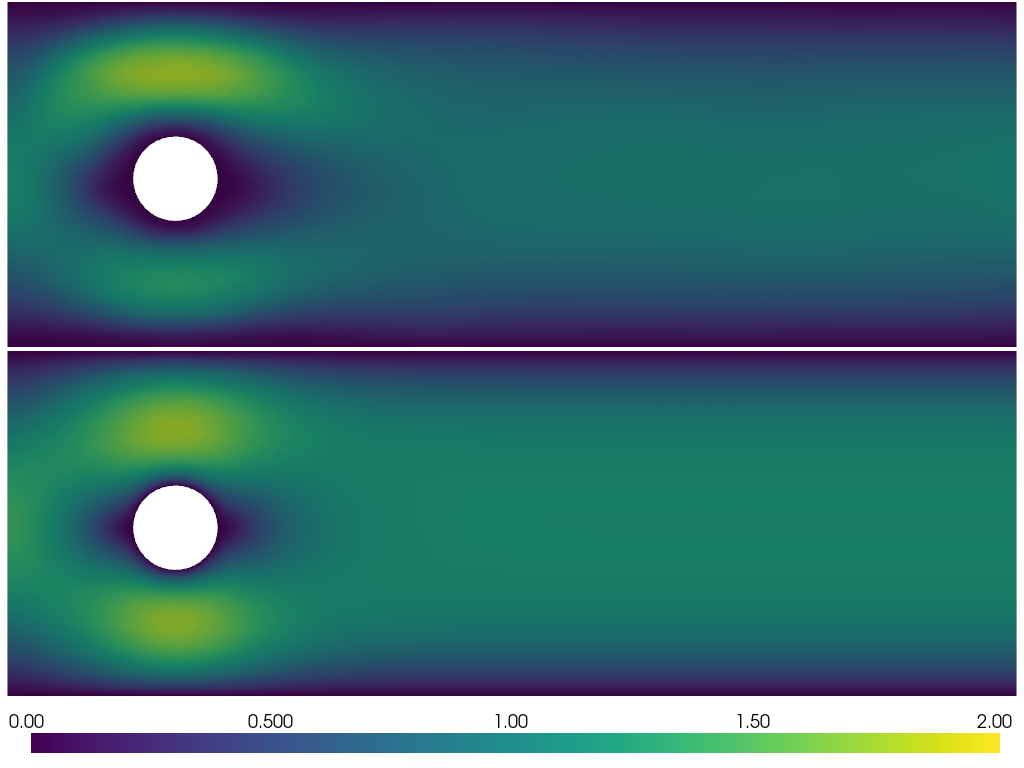}
    \caption{Comparison between the magnitude of the velocity of the \PINN prediction (top figures) with the \abbr{FEM} results (bottom figures) for two points in time $t = 1.0$ (left) and $t = 3.0$ (right). The maximum magnitudes in the colorscale for the different points in time have been chosen suitably for the differences between the two approximations to be visible. }
    \label{fig::Stokes_results_in_u}
\end{figure}

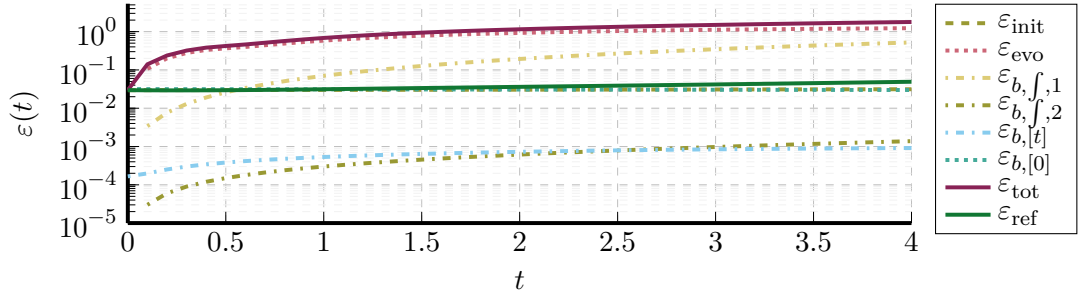
\begin{figure}
    \centering
    \begin{tikzpicture}
        \begin{axis}[
        legend cell align={left},
        legend style={
          fill opacity=0.8,
          draw opacity=1,
          text opacity=1,
          at={(0.97,0.03)},
          anchor=south east,
        },
        width=0.8\linewidth,
        height=0.3\linewidth,
        grid=both,
		grid style={line width=.1pt, dashed, draw=gray!10},
		major grid style={line width=.2pt,draw=gray!50},
		axis lines*=left,
		axis line style={line width=\plotlinewidth},
        xlabel={\(\displaystyle t\)},
        xmajorgrids,
        xmin=0, xmax=4,
        xminorgrids,
        xtick style={color=black},
        ylabel={\(\displaystyle \varepsilon(t)\)},
        ymajorgrids, 
        ymode=log,
        ytick style={color=black},
        ytick={1e-05,0.0001,0.001,0.01,0.1,1,10,100,1000},
        legend pos=outer north east
        ]
        \addplot [line width=\plotlinewidth, c_olive, dashed]
        table[x=t,y=E_init,col sep=comma]{figures/stokes_flow/error_contributions.csv};
        \addlegendentry{\(\displaystyle \varepsilon_{\mathrm{init}}\)}
        
        \addplot [line width=\plotlinewidth, c_rose, dotted]
        table[x=t,y=E_PI,col sep=comma]{figures/stokes_flow/error_contributions.csv};
        \addlegendentry{\(\displaystyle \varepsilon_{\mathrm{evo}}\)}
        
        \addplot [line width=\plotlinewidth, c_sand, dash pattern=on 1pt off 3pt on 3pt off 3pt]
        table[x=t,y=E_bc_int_ub,col sep=comma]{figures/stokes_flow/error_contributions.csv};
        \addlegendentry{\(\displaystyle \varepsilon_{b, \int, 1}\)}
        
        \addplot [line width=\plotlinewidth, c_olive, dash pattern=on 1pt off 3pt on 3pt off 3pt]
        table[x=t,y=E_bc_int_ubdot,col sep=comma]{figures/stokes_flow/error_contributions.csv};
        \addlegendentry{\(\displaystyle \varepsilon_{b, \int, 2}\)}
        
        \addplot [line width=\plotlinewidth, c_cyan, dash pattern=on 1pt off 3pt on 3pt off 3pt]
        table[x=t,y=E_bc_sum_ubt,col sep=comma]{figures/stokes_flow/error_contributions.csv};
        \addlegendentry{\(\displaystyle \varepsilon_{b, [t]}\)}
        
        \addplot [line width=\plotlinewidth, c_teal, dotted]
        table[x=t,y=E_bc_sum_ub0,col sep=comma]{figures/stokes_flow/error_contributions.csv};
        \addlegendentry{\(\displaystyle \varepsilon_{b, [0]}\)}
        
        \addplot [line width=\plotlinewidth, c_wine]
        table[x=t,y=E_tot,col sep=comma]{figures/stokes_flow/error_contributions.csv};
        \addlegendentry{\(\displaystyle \varepsilon_\mathrm{tot}\)}

        \addplot [line width=\plotlinewidth, c_green]
        table[x=t,y=E_ref,col sep=comma]{figures/stokes_flow/error_contributions.csv};
        \addlegendentry{\(\displaystyle \varepsilon_\mathrm{ref}\)}
        \end{axis}
    \end{tikzpicture}
    \caption{Different contributions~\eqref{eq::stokes_contrib} to the error estimator and their sum $\varepsilon_{\mathrm{tot}}$ in comparison with the reference error $\varepsilon_{\mathrm{ref}}$, which was computed as the $\Ltwo$-error between the \PINN prediction and the \abbr{FEM} reference solution.}
    \label{fig::Stokes_error}
\end{figure}

\section{Discussion}

In this contribution, we modified the previously introduced error estimator for \PINNs from \cite{HilU25} to extend its applicability to a broader class of problems. Unlike the original version, the new error estimator does not rely on the theory of input-to-state stability and can thus be applied to systems that are not necessarily stable. Additionally, we developed the necessary theory to estimate the constants in the error estimator using conventional numerical approximation methods, such that is easier accessible to practitioners. We illustrated both benefits with an academic toy example (for the numerical approximation of the key parameters) and the Stokes equation on a two-dimensional domain with a circular obstacle.

The numerical studies confirm the theoretical results, i.e., the error estimator is indeed a true upper bound on the prediction error with a reasonable overestimation of the true error in comparison to similar results from the literature. Furthermore, the numerical experiments showed that the temporal derivative of the boundary error can be weighted similarily to the boundary loss or even omitted entirely. This is in agreement with the theoretical expectation from the full Fattorini trick, which suggests that whenever the system is admissible, the temporal derivative of the boundary error can be absorbed in an alternative operator which captures the impact of the boundary error on the state in the interior of the domain. Fully exploiting this observation in combination with a numerical approximation of the operator is highly non-trivial and remains open for future research. 

Finally, our analysis is restricted to boundary operators that admit linear, bounded right inverses. However, the theory of nonlinear right inverses is well-established in functional analysis. Extending our results to include such operators represents another promising avenue for further investigation.

\section*{Statements and Declarations}
\subsubsection*{Acknowledgements}
Both authors acknowledge funding by the Deutsche Forschungsgemeinschaft (DFG, German Research Foundation) – Project-ID 258734477 – SFB 1173. BH additionally acknowledges funding from the DFG under Germany's Excellence Strategy -- EXC 2075 -- 390740016 and from the International Max Planck Research School for Intelligent Systems (IMPRS-IS). Major parts of this manuscript were written while both authors were affiliated with the University of Stuttgart. 

\subsubsection*{Author contributions}
BH: Conceptualization, methodology, formal analysis, software, investigation, data curation, writing - original draft; BU: Supervision, formal analysis, writing - original draft, funding acquisition.
\bibliographystyle{plain-doi} 
\bibliography{journalAbbr, journalAbbr_other, literature}

\appendix

\section{Derivation of the boundary operator matrices}
\label{app::boundarymat}

We here formalize the step of deriving $\bopn$ and $\bopnrinv$ for the Stokes example from \Cref{subsec:stokes}. For ease of notation and derivation, we derive the boundary operator matrices for linear finite elements and only for a 1D problem, i.e., we only consider $\vel_x$. We assume that the vector entries of the discretized state $\tilde{\vel}_{n}$ firstly contain $\vel_x$ for all nodes and then $\vel_y$ for all nodes. Then, the full boundary operator matrix can be constructed by suitably stacking and concatenating the contributions multiple times. 

For this, we first fix the convention that the $i$th entry of $\tilde{\vel}_{n}$ is the function value of $\vel_x$ at the node of index $i$ at position $(x_i, y_i)$, i.e.,
\begin{equation*}
    \tilde{\vel}_{n} = \left[
        \vel_x(x_1, y_1), \; \dots, \; \vel_x(x_n, y_n), \; \vel_y(x_1, y_1), \, \dots, \; \vel_y(x_n, y_n)
        \right]^\T
    \end{equation*}
here still having $2n$ entries, whereas we ignore the latter $n$ as previously mentioned. All boundary nodes are then collected in
\begin{equation*}
    \mathcal{V}_n \coloneqq \left\lbrace 1 \le k \le n \mid (x_k, y_k) \in \partial\Omega \right\rbrace.
\end{equation*}
For later separation into Neumann and Dirichlet boundary conditions, we further introduce the node sets 
\begin{equation*}
    \begin{aligned}
        \mathcal{V}^D_n \coloneqq \left\lbrace 1 \le k \le n \mid (x_k, y_k) \in \partial\Omega_D \right\rbrace, \quad
        \mathcal{V}^N_n \coloneqq \left\lbrace 1 \le k \le n \mid (x_k, y_k) \in \partial\Omega_N \right\rbrace, 
    \end{aligned}
\end{equation*}
for nodes in the Dirichlet ($\partial\Omega_D$) or Neumann boundaries ($\partial\Omega_D$), respectively. To facilitate independent indexing of boundary values, we introduce the following mapping for all boundary vertices
\begin{equation*}
    \begin{aligned}
        \iota_{\partial\Omega} \colon \{1, \ldots , |\mathcal{V}_n|\} \to \mathcal{V}_n \subsetneq \{1, \ldots , n\}
    \end{aligned}
\end{equation*}
and $\iota_{\partial\Omega}^D, \iota_{\partial\Omega}^N$ for Dirichlet and Neumann boundary vertices, respectively. It maps a dedicated, consecutive boundary index to the general node indexing of the full mesh. For Neumann boundary conditions we need to be able to find the boundary normal and hence introduce the additional mapping 
\begin{equation*}
    \gamma_{\partial\Omega}^N \, \colon \{1, \ldots, |\mathcal{V}_n^N|\} \to \mathcal{V}_n^N \times \{1,\ldots, n\}^2
\end{equation*}
which maps from the boundary node index to the three node indices in the mesh which correspond to the corners of the triangle which contain the surface normal in the sense as depicted in \Cref{fig::neumannboundary}. 

With these preparations in place, we can easily write down the boundary operator matrices for Dirichlet and Neumann boundaries independent of each other. For Dirichlet boundary conditions given by $\vel_{b, 1}(x, y, t)$, this then reads
\begin{equation*}
    \begin{aligned}
        \bopn^D \tilde{\vel}_{1, n} &= \vel_{b, 1, n}, &
        {\bopn^D}_{; i,j} &= \begin{cases}
            1, & \quad \text{if } \iota_{\partial\Omega}^D(i) = j,\\
            0, & \quad \text{else.}
        \end{cases}
    \end{aligned}
\end{equation*}
Further, we find for the boundary operator for the Neumann boundary conditions on the outlet (normal derivative in negative x-direction)
\begin{equation}\label{eq::Neumann_boundary}
    \begin{aligned}
    {\bopn^N}_{;\; i, j} &= \begin{cases}
        \frac{(y_\beta - y_\zeta)}{(x_\alpha - x_\beta)(y_\beta-y_\zeta) - (x_\beta - x_\zeta)(y_\alpha - y_\beta)}, & \quad \text{if } \iota_{\partial\Omega}^N (i) = j, \\ 
        \frac{-y_\alpha + 2 y_\beta - y_\gamma}{(x_\alpha - x_\beta)(y_\beta-y_\zeta) - (x_\beta - x_\zeta)(y_\alpha - y_\beta)}, & \quad \text{if } \gamma_{\partial\Omega}^N (i)[2] = j, \\
        \frac{(y_\alpha - y_\beta)}{(x_\alpha - x_\beta)(y_\beta-y_\zeta) - (x_\beta - x_\zeta)(y_\alpha - y_\beta)}, & \quad \text{if } \gamma_{\partial\Omega}^N (i)[3] = j, \\ 
        0, & \quad \text{else.}
    \end{cases}
    \end{aligned}
\end{equation}
with $\alpha \coloneqq \gamma_{\partial\Omega}^N (i)[1] = \iota_{\partial\Delta}^N (i)$, $\beta \coloneqq \gamma_{\partial\Omega}^N (i)[2]$, and $\zeta \coloneqq \gamma_{\partial\Omega}^N (i)[3]$ for each fixed $i$.
\begin{remark}
    This formula for the Neumann boundary operator matrix come from assuming the linear interpolation between the three points of the triangle under consideration $\phi(x, y) = a x + by + c$ for which in particular the derivative w.r.t.~$x$ is given by $a$. Resolving with known node values and coordinates for $a$ yields the boundary operator \eqref{eq::Neumann_boundary}. 
\end{remark}

The full boundary operator matrix can then be determined by stacking the matrices $\bopn^D$ and $\bopn^N$ and adjusting the right hand side of the boundary equation accordingly. The right inverse $\bopnrinv$ can then be determined easily.

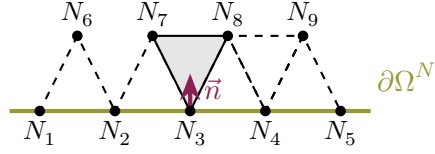
\begin{figure}
	\centering
    \begin{tikzpicture}[scale=1]

    \coordinate (left) at (-0.4,0);
    \coordinate (N1) at (0,0);
    \coordinate (N2) at (1,0);
    \coordinate (N3) at (2,0);
    \coordinate (N4) at (3,0);
    \coordinate (N5) at (4,0);
    \coordinate (right) at (4.4,0);

    \coordinate (N6) at (0.5,1);
    \coordinate (N7) at (1.5,1);
    \coordinate (N8) at (2.5,1);
    \coordinate (N9) at (3.5,1);

    \draw[thick, dashed] (N1) -- (N2) -- (N6) -- cycle;
    \draw[thick, dashed] (N2) -- (N3) -- (N7) -- cycle;
    \draw[thick, fill=gray!20] (N3) -- (N8) -- (N7) -- cycle;
    \draw[thick, dashed] (N3) -- (N4) -- (N8) -- cycle;    
    \draw[thick, dashed] (N4) -- (N5) -- (N9) -- cycle;
    \draw[thick, dashed] (N4) -- (N8) -- (N9) -- cycle;

    \draw[ultra thick, ->, >=Stealth, c_wine] (N3) -- (2, 0.5);
    \node[c_wine] at (2.3, 0.3) {$\vec{n}$};

    \draw[ultra thick, color=c_olive] (left) -- (right) node[pos = 1.1, above=3pt, c_olive] {$\partial\Omega^N$};

    \foreach \i in {1,...,5} {
        \fill (N\i) circle (2pt);
        \node[below] at (N\i) {\small $N_{\i}$};
    }
    \foreach \i in {6,...,9} {
        \fill (N\i) circle (2pt);
        \node[above] at (N\i) {\small $N_{\i}$};
    }

    \end{tikzpicture}
    \caption{Excerpt of a triangular mesh for which the normal vector at the node $N_3$ is depicted. The cell marked in light gray containing $N_3+\epsilon\vec{n}$ is the one considered for determining~$\bopn^N$.}
    \label{fig::neumannboundary}
\end{figure}

\end{document}